\newcommand{\NN}{\mathbb{N}}
\newtheorem{theorem}{Theorem}[section]
\newtheorem{lemma}[theorem]{Lemma}
\newtheorem{proposition}[theorem]{Proposition}
\newtheorem{corollary}[theorem]{Corollary}
\newtheorem{definition}[theorem]{Definition}
\newtheorem{example}[theorem]{Example}
\newtheorem{remark}[theorem]{Remark}
\DeclareMathOperator{\diam}{diam}
\renewcommand{\a}{\alpha}
\renewcommand{\d}{\delta}
\newcommand{\D}{\Delta}
\newcommand{\g}{\gamma}
\newcommand{\G}{\Gamma}
\newcommand{\s}{\sigma}
\journal{Aequationes Mathematicae}
\begin{document}

\begin{frontmatter}

\title{Hyperbolicity in the corona and join of graphs}

\author[a]{Walter Carballosa\corref{x}}
\address[a]{Consejo Nacional de Ciencia y Tecnolog\'ia (CONACYT) $\&$ Universidad Aut\'onoma de Zacatecas,
Paseo la Bufa, int. Calzada Solidaridad, 98060 Zacatecas, ZAC, M\'exico}
\ead{wcarballosato@conacyt.mx} \cortext[x]{Corresponding author.}

\author[b]{Jos\'e M. Rodr{\'\i}guez}
\address[b]{Department of Mathematics, Universidad Carlos III de Madrid, Av. de la Universidad 30, 28911 Legan\'es, Madrid, Spain.}
\ead{jomaro@math.uc3m.es}

\author[d]{Jos\'e M. Sigarreta}
\address[d]{Faculdad de Matem\'aticas, Universidad Aut\'onoma de Guerrero, Carlos E. Adame 5, Col. La Garita, Acapulco, Guerrero, Mexico}
\ead{jsmathguerrero@gmail.com}

\begin{abstract}
If X is a geodesic metric space and $x_1,x_2,x_3\in X$, a {\it
geodesic triangle} $T=\{x_1,x_2,x_3\}$ is the union of the three
geodesics $[x_1x_2]$, $[x_2x_3]$ and $[x_3x_1]$ in $X$. The space
$X$ is $\delta$-\emph{hyperbolic} $($in the Gromov sense$)$ if any side
of $T$ is contained in a $\delta$-neighborhood of the union of the two
other sides, for every geodesic triangle $T$ in $X$.
If $X$ is hyperbolic, we denote by
$\delta(X)$ the sharp hyperbolicity constant of $X$, i.e.
$\delta(X)=\inf\{\delta\ge 0: \, X \, \text{ is $\delta$-hyperbolic}\,\}\,.$
Some previous works characterize the hyperbolic product graphs (for the Cartesian product, strong product and lexicographic product) in terms of properties of the factor graphs.
In this paper we characterize the hyperbolic product graphs for graph join $G_1\uplus G_2$ and the corona $G_1\diamond G_2$:
$G_1\uplus G_2$ is always hyperbolic, and $G_1\diamond G_2$ is hyperbolic if and only if $G_1$ is hyperbolic.
Furthermore, we obtain simple formulae for the hyperbolicity constant of the graph join $G_1\uplus G_2$ and the corona $G_1\diamond G_2$.
\end{abstract}

\begin{keyword}
Graph join \sep Corona graph \sep Gromov hyperbolicity \sep Infinite graph

\MSC[2010] 05C69 \sep 05A20 \sep 05C50.
\end{keyword}

\end{frontmatter}

\section{Introduction}

Hyperbolic spaces play an important role in geometric
group theory and in the geometry of negatively curved
spaces (see \cite{ABCD,GH,G1}).
The concept of Gromov hyperbolicity grasps the essence of negatively curved
spaces like the classical hyperbolic space, Riemannian manifolds of
negative sectional curvature bounded away from $0$, and of discrete spaces like trees
and the Cayley graphs of many finitely generated groups. It is remarkable
that a simple concept leads to such a rich
general theory (see \cite{ABCD,GH,G1}).

The first works on Gromov hyperbolic spaces deal with
finitely generated groups (see \cite{G1}). 
Initially, Gromov spaces were applied to the study of automatic groups in the science of computation
(see, \emph{e.g.}, \cite{O}); indeed, hyperbolic groups are strongly geodesically automatic, \emph{i.e.}, there is an automatic structure on the group \cite{Cha}.

The concept of hyperbolicity appears also in discrete mathematics, algorithms
and networking. For example, it has been shown empirically
in \cite{ShTa} that the internet topology embeds with better accuracy
into a hyperbolic space than into an Euclidean space
of comparable dimension; the same holds for many complex networks, see \cite{KPKVB}.
A few algorithmic problems in
hyperbolic spaces and hyperbolic graphs have been considered
in recent papers (see \cite{ChEs,Epp,GaLy,Kra}).
Another important
application of these spaces is the study of the spread of viruses through on the
internet (see \cite{K21,K22}).
Furthermore, hyperbolic spaces are useful in secure transmission of information on the
network (see \cite{K27,K21,K22,NS}).

The study of Gromov hyperbolic graphs is a subject of increasing interest; see, \emph{e.g.}, \cite{BRS,BRSV2,BRST,BPK,BHB1,CDR,CPRS,CRS,CRSV,CDEHV,K50,K27,K21,K22,K23,K24,K56,KPKVB,MRSV,MRSV2,NS,PeRSV,PRST,PRSV,PT,R,RSVV,S,S2,T,WZ} and the references therein.

We say that the curve $\g$ in a metric space $X$ is a
\emph{geodesic} if we have $L(\g|_{[t,s]})=d(\g(t),\g(s))=|t-s|$ for every $s,t\in [a,b]$
(then $\gamma$ is equipped with an arc-length parametrization).
The metric space $X$ is said \emph{geodesic} if for every couple of points in
$X$ there exists a geodesic joining them; we denote by $[xy]$
any geodesic joining $x$ and $y$; this notation is ambiguous, since in general we do not have uniqueness of
geodesics, but it is very convenient.
Consequently, any geodesic metric space is connected.
If the metric space $X$ is
a graph, then the edge joining the vertices $u$ and $v$ will be denoted by $[u,v]$.

Along the paper we just consider graphs with every edge of length $1$.
In order to consider a graph $G$ as a geodesic metric space, identify (by an isometry)
any edge $[u,v]\in E(G)$ with the interval $[0,1]$ in the real line;
then the edge $[u,v]$ (considered as a graph with just one edge)
is isometric to the interval $[0,1]$.
Thus, the points in $G$ are the vertices and, also, the points in the interior
of any edge of $G$.
In this way, any connected graph $G$ has a natural distance
defined on its points, induced by taking shortest paths in $G$,
and we can see $G$ as a metric graph.
If $x,y$ are in different connected components of $G$, we define $d_G(x,y)=\infty$.
Throughout this paper, $G=(V,E)$ denotes a simple graph (not necessarily connected) such that every edge has length $1$
and $V\neq \emptyset$.
These properties guarantee that any connected graph is a geodesic metric space. 
Note that to exclude multiple
edges and loops is not an important loss of generality, since
\cite[Theorems 8 and 10]{BRSV2} reduce the problem of compute
the hyperbolicity constant of graphs with multiple edges and/or
loops to the study of simple graphs.
For a nonempty set $X\subseteq V$, and a vertex $v\in V$, $N_X(v)$ denotes the set of neighbors $v$ has in $X$:
$N_X(v):=\{u\in X: [u,v]\in E\},$ and the degree of $v$ in $X$ will be
denoted by $\deg_{X}(v)=|N_{X}(v)|$. We denote the degree of a vertex $v\in V$ in $G$  by $\deg(v)\le\infty$, and
the maximum degree of $G$ by  $\D_{G}:=\sup_{v\in V}\deg(v)$.

Consider a polygon $J=\{J_1,J_2,\dots,J_n\}$
with sides $J_j\subseteq X$ in a geodesic metric space $X$.
We say that $J$ is $\d$-{\it thin} if for
every $x\in J_i$ we have that $d(x,\cup_{j\neq i}J_{j})\le \d$.
Let us
denote by $\d(J)$ the sharp thin constant of $J$, \emph{i.e.},
$\d(J):=\inf\{\d\ge 0: \, J \, \text{ is $\d$-thin}\,\}\,. $
If $x_1,x_2,x_3$ are three points in $X$, a {\it geodesic triangle} $T=\{x_1,x_2,x_3\}$ is
the union of the three geodesics $[x_1x_2]$, $[x_2x_3]$ and
$[x_3x_1]$ in $X$.
We say that $X$ is $\d$-\emph{hyperbolic} if every geodesic
triangle in $X$ is $\d$-thin, and we denote by $\d(X)$ the sharp
hyperbolicity constant of $X$, \emph{i.e.}, $\d(X):=\sup\{\d(T): \, T \,
\text{ is a geodesic triangle in }\,X\,\}.$ We say that $X$ is
\emph{hyperbolic} if $X$ is $\d$-hyperbolic for some $\d \ge 0$; then $X$ is hyperbolic if and only if
$ \d(X)<\infty.$
If $X$ has connected components $\{X_i\}_{i\in I}$, then we define $\d(X):=\sup_{i\in I} \d(X_i)$, and we say that $X$ is hyperbolic if $\d(X)<\infty$.

In the classical references on this subject (see, \emph{e.g.}, \cite{BHB,GH})
appear several different definitions of Gromov hyperbolicity, which are equivalent in the sense
that if $X$ is $\d$-hyperbolic with respect to one definition,
then it is $\d'$-hyperbolic with respect to another definition (for some $\d'$ related to $\d$).
The definition that we have chosen has a deep geometric meaning (see, \emph{e.g.}, \cite{GH}).

Trivially, any bounded
metric space $X$ is $((\diam X)/2)$-hyperbolic.
A normed linear space is hyperbolic if and only if it has dimension one.
A geodesic space is $0$-hyperbolic if and only if it is a metric tree.
If a complete Riemannian manifold is simply connected and its sectional curvatures satisfy
$K\leq c$ for some negative constant $c$, then it is hyperbolic.
See the classical references \cite{ABCD,GH} in order to find further results.

We want to remark that the main examples of hyperbolic graphs are the trees.
In fact, the hyperbolicity constant of a geodesic metric space can be viewed as a measure of
how ``tree-like'' the space is, since those spaces $X$ with $\delta(X) = 0$ are precisely the metric trees.
This is an interesting subject since, in
many applications, one finds that the borderline between tractable and intractable
cases may be the tree-like degree of the structure to be dealt with
(see, \emph{e.g.}, \cite{CYY}).

Given a Cayley graph (of a presentation with solvable word problem)
there is an algorithm which allows to decide if it is hyperbolic.
However, for a general graph or a general geodesic metric space
deciding whether or not a space is
hyperbolic is usually very difficult.
Therefore, it is interesting to study the hyperbolicity of particular classes of graphs.
The papers \cite{BRST,BHB1,CCCR,CDR,CRSV,MRSV2,PeRSV,PRSV,R,Si} study the hyperbolicity of, respectively, complement of graphs, chordal
graphs, strong product graphs, lexicographic product graphs,
line graphs, Cartesian product graphs, cubic graphs, tessellation graphs, short graphs and median graphs.
In \cite{CCCR,CDR,MRSV2} the authors characterize the hyperbolic product graphs (for strong product, lexicographic product and Cartesian product) in terms of properties of the factor graphs.
In this paper we characterize the hyperbolic product graphs for graph join $G_1\uplus G_2$ and the corona $G_1\diamond G_2$:
$G_1\uplus G_2$ is always hyperbolic, and $G_1\diamond G_2$ is hyperbolic if and only if $G_1$ is hyperbolic
(see Corollaries \ref{cor:SP} and \ref{cor:sup}).
Furthermore, we obtain simple formulae for the hyperbolicity constant of the graph join $G_1\uplus G_2$ and the corona $G_1\diamond G_2$ (see Theorems \ref{th:hypJoin} and \ref{th:corona}).
In particular, Theorem \ref{th:corona} states that
$\d(G_1\diamond G_2)=\max\{\d(G_1),\d(G_2\uplus E_1)\}$, where $E_1$ is a graph with just one vertex.
We want to remark that it is not usual at all to obtain explicit formulae for the hyperbolicity constant of large classes of graphs.

\section{Distance in graph join}

In order to estimate the hyperbolicity constant of the graph join $G_1\uplus G_2$ of $G_1$ and $G_2$, we will need an explicit formula for the distance between two arbitrary points.
We will use the definition given by Harary in \cite{H}.

\begin{definition}\label{def:join}
Let $G_1=(V(G_1),E(G_1))$ and $G_2=(V(G_2),E(G_2))$ two graphs with $V(G_1)\cap V(G_2)=\varnothing$. The \emph{graph join} $G_1\uplus G_2$ of $G_1$ and $G_2$ has $V(G_1\uplus G_2)=V(G_1) \cup V(G_2)$ and two different vertices $u$ and $v$ of $G_1\uplus G_2$ are adjacent if $u\in V(G_1)$ and $v\in V(G_2)$, or $[u,v]\in E(G_1)$ or $[u,v]\in E(G_2)$.
\end{definition}

From the definition, it follows that the graph join of two graphs is commutative.
Figure \ref{fig:join} shows the graph join of two graphs.

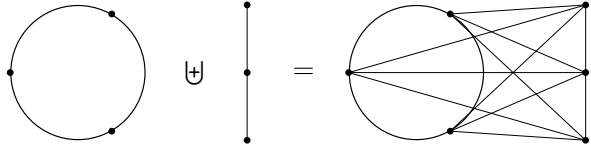
\begin{figure}[h]
\centering
\scalebox{.9}
{\begin{pspicture}(-1.2,-1.2)(7.7,1.2)
\pscircle[linewidth=.5pt](0,0){1}
\cnode*[](-1,0){0.05}{A}
\cnode*[](0.5,0.866025){0.05}{B}
\cnode*[](0.5,-0.866025){0.05}{C}
\cnode*[](2.5,1){0.05}{E}
\cnode*[](2.5,0){0.05}{F}
\cnode*[](2.5,-1){0.05}{G}
\psline[linewidth=0.01cm,dotsize=0.07055555cm 2.5]{-}(2.5,1)(2.5,-1)
\pscircle[linewidth=.5pt](5,0){1}
\cnode*[](4,0){0.05}{A'}
\cnode*[](5.5,0.866025){0.05}{B'}
\cnode*[](5.5,-0.866025){0.05}{C'}
\cnode*[](7.5,1){0.05}{E'}
\cnode*[](7.5,0){0.05}{F'}
\cnode*[](7.5,-1){0.05}{G'}
\psline[linewidth=0.01cm,dotsize=0.07055555cm 2.5]{-}(7.5,1)(7.5,-1)
\psline[linewidth=0.01cm,dotsize=0.07055555cm 2.5]{-}(4,0)(7.5,1)
\psline[linewidth=0.01cm,dotsize=0.07055555cm 2.5]{-}(4,0)(7.5,0)
\psline[linewidth=0.01cm,dotsize=0.07055555cm 2.5]{-}(4,0)(7.5,-1)
\psline[linewidth=0.01cm,dotsize=0.07055555cm 2.5]{-}(5.5,0.866025)(7.5,1)
\psline[linewidth=0.01cm,dotsize=0.07055555cm 2.5]{-}(5.5,0.866025)(7.5,0)
\psline[linewidth=0.01cm,dotsize=0.07055555cm 2.5]{-}(5.5,0.866025)(7.5,-1)
\psline[linewidth=0.01cm,dotsize=0.07055555cm 2.5]{-}(5.5,-0.866025)(7.5,1)
\psline[linewidth=0.01cm,dotsize=0.07055555cm 2.5]{-}(5.5,-0.866025)(7.5,0)
\psline[linewidth=0.01cm,dotsize=0.07055555cm 2.5]{-}(5.5,-0.866025)(7.5,-1)
\uput[0](1.4,0){$\biguplus$}
\uput[0](3,0){\large{=}}
\end{pspicture}}
\caption{Graph join of two graphs $C_3 \uplus P_3$.} \label{fig:join}
\end{figure}

\begin{remark}\label{r:K_nm}
For every graphs $G_1,G_2$ we have that $G_1\uplus G_2$ is a connected graph with a subgraph isomorphic to a complete bipartite graph with $V(G_1)$ and $V(G_2)$ as its parts.
\end{remark}

Note that, from a geometric viewpoint, the graph join $G_1\uplus G_2$ is obtained as an union of the graphs $G_1$, $G_2$ and the complete bipartite graph $K(G_1,G_2)$ linking the vertices of $V(G_1)$ and $V(G_2)$.

The following result allows to compute the distance between any two points in $G_1\uplus G_2$. Furthermore, this result provides information about the geodesics in the graph join.

\begin{proposition}\label{prop:JoinDist}
For every graphs $G_1, G_2$ we have:

\begin{itemize}
\item[(a)] If $x,y \in G_i$ ($i\in\{1,2\}$), then
             \[d_{G_1\uplus G_2}(x,y) = \min\left\{ d_{G_i}(x,y) , d_{G_i}\big(x,V(G_i)\big)+2+d_{G_i}\big(V(G_i),y\big)\right\}.\]
\item[(b)] If $x \in G_i$ and $y \in G_j$ with $i\neq j$, then
             \[d_{G_1\uplus G_2}(x,y) = d_{G_i}\big(x,V(G_i)\big)+1+d_{G_j}\big(V(G_j),y\big).\]
\item[(c)] If $x \in G_i$ and $y \in K(G_1,G_2)$, then
             \[d_{G_1\uplus G_2}(x,y) = \min\left\{ d_{G_i}(x,Y_i)+d_{G_1\uplus G_2}(Y_i,y) , d_{G_i}\big(x,V(G_i)\big)+1+d_{G_1\uplus G_2}(Y_j,y)\right\},\]
           where $y\in [Y_1,Y_2]$ with $Y_i\in V(G_i)$ and $Y_j\in V(G_j)$.
\item[(d)] If $x,y \in K(G_1,G_2)$, then
             \[d_{G_1\uplus G_2}(x,y) = \min\{ d_{K(G_1,G_2)}(x,y), M\},\]
           where $x\in [X_1,X_2]$, $y\in [Y_1,Y_2]$ with $X_1,Y_1\in V(G_1)$ and $X_2,Y_2\in V(G_2)$, and $M=\min_{i\in\{1,2\}}\{d_{G_1\uplus G_2}(x,X_i)+d_{G_i}(X_i,Y_i)+d_{G_1\uplus G_2}(Y_i,y)\}$ 
\end{itemize}
\end{proposition}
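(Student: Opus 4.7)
The plan is to prove each of the four formulas by establishing matching upper and lower bounds. By Remark~\ref{r:K_nm}, $G_1\uplus G_2$ decomposes geometrically as the union of $G_1$, $G_2$, and the complete bipartite graph $K(G_1,G_2)$, and the argument will proceed through a structural analysis of how an arbitrary path from $x$ to $y$ transitions between these three subgraphs.

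For the upper bounds I would exhibit the explicit candidate paths realizing each term: in (a), either the $G_i$-geodesic, or a detour $x\to v\to w\to v'\to y$ with $v,v'\in V(G_i)$ closest to $x,y$ and $w\in V(G_j)$ arbitrary; in (b), the path $x\to v\to w\to y$ with $v\in V(G_i)$ nearest $x$ and $w\in V(G_j)$ nearest $y$; in (c), either $x\to Y_i$ inside $G_i$ followed by the segment from $Y_i$ to $y$ along $[Y_1,Y_2]$, or exit $G_i$ at the nearest $V(G_i)$-vertex, cross one bipartite edge to $Y_j$, and reach $y$ along $[Y_1,Y_2]$; and in (d), either a path wholly in $K(G_1,G_2)$, or a detour exiting $x$'s bipartite edge at $X_i$, traversing $G_i$ to $Y_i$, and entering $y$'s bipartite edge.

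For the lower bounds, the key geometric fact is that any path leaving an interior point of $G_i$ toward $V(G_j)$ must first hit a vertex of $V(G_i)$, since the only edges leaving $G_i$ lie in $K(G_1,G_2)$ and each such edge has exactly one endpoint in $V(G_i)$. Given any path $\gamma$ from $x$ to $y$, in case (a) I would split on whether $\gamma$ ever visits $V(G_j)$: if not, $\gamma$ stays in $G_i$ and has length $\ge d_{G_i}(x,y)$; if so, let $v_1,v_2\in V(G_i)$ be the last, resp.\ first, $V(G_i)$-vertex visited before, resp.\ after, $\gamma$'s time in $V(G_j)$. Any excursion from $V(G_i)$ into $K(G_1,G_2)$ that does not reach $V(G_j)$ returns to the same $V(G_i)$-vertex and can be excised without lengthening, so the pieces $x\to v_1$ and $v_2\to y$ can be replaced by paths in $G_i$ of length $\ge d_{G_i}(x,V(G_i))$ and $\ge d_{G_i}(V(G_i),y)$ respectively, while the middle contributes $\ge 2$ from leaving and returning to $V(G_i)$. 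This yields (a); parts (b) and (c) follow by analogous conditioning, additionally tracking the bipartite edge through which $\gamma$ approaches $y$.

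The main obstacle will lie in part (d), where the lower bound requires arguing that any excursion through $G_i$ may be assumed, without lengthening, to exit $x$'s bipartite edge at the same-side endpoint $X_i$ and re-enter $y$'s bipartite edge at $Y_i$. If a geodesic instead exits or re-enters at some $u\in V(G_i)\setminus\{X_i,Y_i\}$, the triangle inequality in $G_i$ combined with the bound $d_{G_1\uplus G_2}(u,u')\le 2$ for all $u,u'\in V(G_i)$ (realized through any common neighbor in $V(G_j)$) forces the length either to exceed the $M$ expression or to be matched by a competing path wholly in $K(G_1,G_2)$; the careful bookkeeping of these subcases is the most delicate step.
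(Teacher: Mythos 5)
Your proposal is correct and follows essentially the same route as the paper: the paper's (very terse) proof likewise identifies, for each item, the finitely many candidate shortest paths according to which of the pieces $G_1$, $G_2$, $K(G_1,G_2)$ the path passes through, and takes the minimum of their lengths. Your write-up is in fact more careful than the paper's sketch, since you explicitly justify the lower bounds (every competing path must hit $V(G_i)$ before leaving $G_i$, excursions can be excised, and the wrong-side exits in item (d) are dominated), all of which the paper leaves implicit.
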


\begin{proof}
We will prove each item separately.
In item (a), if $i\neq j$, we consider the two shortest possible paths from $x$ to $y$ such that they either is contained in $G_i$ or intersects $G_j$ (and then it intersects $G_j$ just in a single vertex).
In item (b), since any path in $G_1\uplus G_2$ joining $x$ and $y$ contains at less one edge in $K(G_1,G_2)$, we have a geodesic when the path contains an edge joining a closest vertex to $x$ in $V(G_i)$ and a closest vertex to $y$ in $V(G_j)$.
In item (c) we consider the two shortest possible paths from $x$ to $y$ containing either $Y_1$ or $Y_2$.
Finally, in item (d) we may consider the three shortest possible paths from $x$ to $y$ such that they either is contained in $K(G_1,G_2)$ or contains at lest an edge in $E(G_1)$ or contains at lest an edge in $E(G_2)$.
\end{proof}

We say that a subgraph  $\G$ of $G$ is \emph{isometric} if $d_{\G}(x,y)=d_{G}(x,y)$ for every $x,y\in \G$. Proposition \ref{prop:JoinDist} gives the following result.

\begin{proposition}\label{prop:IsomJoin}
  Let $G_1,G_2$ be two graph and let $\G_1,\G_2$ be isometric subgraphs to $G_1$ and $G_2$, respectively. Then, $\G_1\uplus\G_2$ is an isometric subgraph to $G_1\uplus G_2$.
\end{proposition}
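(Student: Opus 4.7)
The plan is to apply Proposition \ref{prop:JoinDist} inside both $\Gamma_1\uplus\Gamma_2$ and $G_1\uplus G_2$ and verify that the two resulting formulas agree, case by case. Since $\Gamma_1\uplus\Gamma_2$ is a subgraph of $G_1\uplus G_2$ we immediately have $d_{G_1\uplus G_2}(x,y)\le d_{\Gamma_1\uplus\Gamma_2}(x,y)$, so only the reverse inequality requires work.

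The key preliminary observation is that for every $x\in\Gamma_i$ one has
\[
d_{\Gamma_i}(x,V(\Gamma_i))=d_{G_i}(x,V(G_i)),
\]
because both sides are at most $1/2$ and are attained at an endpoint of the (common) edge of $\Gamma_i\subseteq G_i$ containing $x$. Combined with the hypothesis $d_{\Gamma_i}(x,y)=d_{G_i}(x,y)$ for $x,y\in\Gamma_i$, this disposes of cases (a) and (b) of Proposition \ref{prop:JoinDist}, since every summand appearing in the corresponding minima is preserved when passing from $G_i$ to $\Gamma_i$.

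For cases (c) and (d), I would first note that any edge $[Y_1,Y_2]$ of $K(\Gamma_1,\Gamma_2)$ is also an edge of $K(G_1,G_2)$, so for $y\in[Y_1,Y_2]$ the local distances $d(Y_i,y)$ coincide in the two joins. Case (c) then follows by plugging the previous identities into the formula. For case (d) the only genuinely new point is to check that
\[
d_{K(\Gamma_1,\Gamma_2)}(x,y)=d_{K(G_1,G_2)}(x,y)
\]
whenever $x,y$ lie on edges of $K(\Gamma_1,\Gamma_2)$. Since $V(\Gamma_1)$ and $V(\Gamma_2)$ are nonempty, $K(\Gamma_1,\Gamma_2)$ is itself a complete bipartite graph, and any shortest path in $K(G_1,G_2)$ between $x\in[X_1,X_2]$ and $y\in[Y_1,Y_2]$ can be routed through the four endpoints $X_1,Y_1\in V(\Gamma_1)$, $X_2,Y_2\in V(\Gamma_2)$, all of which belong to $K(\Gamma_1,\Gamma_2)$; the remaining edges of such a path are also edges of $K(\Gamma_1,\Gamma_2)$ by completeness.

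The main obstacle is essentially bookkeeping: one has to match the terms inside each of the minima in Proposition \ref{prop:JoinDist} and confirm that no summand changes when passing from $G_i$ to $\Gamma_i$. Once the local-distance identity above and the complete-bipartite-graph observation are in hand, each of the four cases reduces to a direct substitution, and the desired equality $d_{\Gamma_1\uplus\Gamma_2}(x,y)=d_{G_1\uplus G_2}(x,y)$ follows for all $x,y\in\Gamma_1\uplus\Gamma_2$.
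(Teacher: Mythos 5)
Your proposal is correct and follows exactly the route the paper intends: the paper's "proof" is just the remark that Proposition \ref{prop:JoinDist} gives the result, i.e., the same case-by-case matching of the distance formulas in $\G_1\uplus\G_2$ and $G_1\uplus G_2$ that you carry out. Your preliminary identity $d_{\G_i}(x,V(\G_i))=d_{G_i}(x,V(G_i))$ and the complete-bipartite rerouting argument are precisely the details the paper leaves implicit, and they check out.
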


The following result allows to compute the diameter of the set of vertices in a graph join.

\begin{proposition}\label{prop:vert}
For every graphs $G_1,G_2$ we have $1\le\diam V(G_1\uplus G_2)\le 2$. Furthermore, $\diam V(G_1\uplus G_2)=1$ if and only if $G_1$ and $G_2$ are complete graphs.
\end{proposition}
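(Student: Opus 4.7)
The plan is to read both bounds directly off Definition \ref{def:join}, and then examine when the upper bound is attained with equality downward to $1$.

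First, I would prove $\diam V(G_1\uplus G_2)\le 2$ by taking any two distinct vertices $u,v$. If $u$ and $v$ lie on opposite sides (one in $V(G_1)$, the other in $V(G_2)$), they are adjacent by Definition \ref{def:join}, so $d(u,v)=1$. If both vertices lie in $V(G_i)$, I would choose any $w\in V(G_j)$ with $j\ne i$ (this uses only $V(G_j)\ne\varnothing$, which is part of our standing hypothesis); then $u-w-v$ is a path of length $2$ in $K(G_1,G_2)\subseteq G_1\uplus G_2$, giving $d_{G_1\uplus G_2}(u,v)\le 2$. This settles the upper bound.

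Next, for the lower bound, pick any $u\in V(G_1)$ and $v\in V(G_2)$; these vertices are distinct because $V(G_1)\cap V(G_2)=\varnothing$, and they are adjacent, so $\diam V(G_1\uplus G_2)\ge 1$.

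Finally, for the characterization, note that $\diam V(G_1\uplus G_2)=1$ is equivalent to every pair of distinct vertices of $G_1\uplus G_2$ being adjacent, i.e., $G_1\uplus G_2$ being a complete graph on $V(G_1)\cup V(G_2)$. Cross pairs are already adjacent by Definition \ref{def:join}, so the condition reduces to the statement that every pair $u,v\in V(G_i)$ satisfies $[u,v]\in E(G_i)$ for $i=1,2$, which is precisely the assertion that both $G_1$ and $G_2$ are complete (the case of a single-vertex factor being vacuously complete). The converse is immediate: if $G_1,G_2$ are complete, then every pair of vertices in $V(G_1\uplus G_2)$ is adjacent, so the diameter of the vertex set is exactly $1$. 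There is no real obstacle in this proof; the only thing to be careful about is not to assume that each $V(G_i)$ has more than one vertex when invoking an intermediate vertex, which is why the choice of $w$ above needs $V(G_j)$ rather than $V(G_i)$ nonempty.
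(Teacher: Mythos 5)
Your proof is correct and follows essentially the same route as the paper: the upper bound via a two-step path through the complete bipartite part $K(G_1,G_2)$, the lower bound from a cross pair of (necessarily distinct) vertices, and the characterization by reducing adjacency of same-side pairs to completeness of each factor. The only difference is that you spell out the case analysis that the paper compresses into the single inequality $d_{G_1\uplus G_2}(u,v)\le d_{K(G_1,G_2)}(u,v)\le 2$.
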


\begin{proof}
Since $V(G_1),V(G_2)\neq\emptyset$, $\diam V(G_1\uplus G_2)\ge 1$. Besides, if $u,v\in V(G_1\uplus G_2)$, we have $d_{G_1\uplus G_2}(u,v)\le d_{K(G_1,G_2)}(u,v)\le 2$.

In order to finish the proof note that on the one hand, if $G_1$ and $G_2$ are complete graphs, then $G_1\uplus G_2$ is a complete graph with at least $2$ vertices and $\diam V(G_1\uplus G_2)=1$. On the other hand, if $\diam V(G_1\uplus G_2)=1$, then for every two vertices $u,v \in V(G_1)$ we have $[u,v]\in E(G_1)$; by symmetry, we have the same result for every $u,v \in V(G_2)$.
\end{proof}

Since $\diam V(G) \le \diam G \le \diam V(G) + 1$ for every graph $G$, the previous proposition has the following consequence.

\begin{corollary}\label{c:diam}
For every graphs $G_1,G_2$ we have $1\le\diam G_1\uplus G_2\le 3$.
\end{corollary}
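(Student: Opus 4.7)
The plan is to derive the corollary directly from Proposition \ref{prop:vert} together with the general inequality $\diam V(G) \le \diam G \le \diam V(G) + 1$, which is already invoked in the paper's parenthetical remark. So the main task is just to justify why this interpolation inequality is valid and then to plug in the bounds $1 \le \diam V(G_1 \uplus G_2) \le 2$.

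First I would verify the general inequality. The lower bound $\diam V(G) \le \diam G$ is immediate because vertices are points of $G$ and the supremum over a subset is no larger than the supremum over the whole set. For the upper bound, take any two points $x,y \in G$. Each lies on some edge, hence is at distance at most $1/2$ from a nearest endpoint, say $u_x$ and $u_y$. A geodesic in $G$ can be constructed by going from $x$ to $u_x$, then via a shortest vertex-to-vertex path to $u_y$, and then from $u_y$ to $y$, giving $d_G(x,y) \le \tfrac12 + \diam V(G) + \tfrac12 = \diam V(G)+1$. Taking the supremum over $x,y$ yields the claim.

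Next I would apply this to $G=G_1\uplus G_2$. By Proposition \ref{prop:vert}, $1 \le \diam V(G_1\uplus G_2) \le 2$. Stringing the inequalities together gives
\[
1 \le \diam V(G_1\uplus G_2) \le \diam G_1\uplus G_2 \le \diam V(G_1\uplus G_2) + 1 \le 3,
\]
which is exactly the desired conclusion.

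There is no genuine obstacle here; the content of the corollary is already entirely captured by the diameter bound for the vertex set together with a universal interpolation lemma for graphs with unit-length edges. The only thing one must be careful about is that the interpolation lemma uses the convention (fixed earlier in the paper) that every edge has length $1$, so the ``$+1$'' truly comes from two half-edge detours rather than something larger.
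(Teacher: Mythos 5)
Your proof is correct and follows exactly the paper's route: the paper likewise obtains the corollary by combining Proposition \ref{prop:vert} with the standard interpolation inequality $\diam V(G)\le\diam G\le\diam V(G)+1$, which you have merely spelled out in more detail.
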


Proposition \ref{prop:JoinDist} and Corollary \ref{c:diam} give the following results.
Given a graph $G$, we say that $x\in G$ is a midpoint (of an edge) if $d_{G}(x,V(G))=1/2$.

\begin{corollary}\label{cor:midpoint}
  Let $G_1,G_2$ be two graphs. If $d_{G_1\uplus G_2}(x,y) = 3$, then $x,y$ are two midpoints in $G_i$ with $d_{G_i}(x,y)\ge3$ for some $i\in \{1,2\}$.
\end{corollary}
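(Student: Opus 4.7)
My plan is to combine the diameter bound $\diam(G_1\uplus G_2)\le 3$ from Corollary \ref{c:diam} with the case analysis of Proposition \ref{prop:JoinDist}. Since $d_{G_1\uplus G_2}(x,y)=3$ is the maximum possible distance, I would show that only configuration (a) of that proposition can realize it, and then read off the midpoint and $d_{G_i}(x,y)\ge 3$ conditions directly from the formula in (a).

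I would first eliminate cases (b), (c), (d). In case (b) the formula gives $d_{G_1\uplus G_2}(x,y)=d_{G_i}(x,V(G_i))+1+d_{G_j}(V(G_j),y)\le 1/2+1+1/2=2<3$, using only that every point of a graph lies within $1/2$ of some vertex. In case (c), the second term of the minimum is $d_{G_i}(x,V(G_i))+1+d_{G_1\uplus G_2}(Y_j,y)\le 1/2+1+1=5/2$, because $y$ lies on the edge $[Y_1,Y_2]$ and is therefore at distance at most $1$ from either of its endpoints. Case (d) needs a small argument: writing $s=d(x,X_1)\in[0,1]$ and $t=d(y,Y_1)\in[0,1]$ for the positions of $x,y$ on their edges, the complete bipartite structure of $K(G_1,G_2)$ provides the edges $[X_1,Y_2]$ and $[X_2,Y_1]$, and the two paths $x\to X_1\to Y_2\to y$ and $x\to X_2\to Y_1\to y$ have lengths $s+1+(1-t)$ and $(1-s)+1+t$, summing to $4$; at least one is thus $\le 2$, giving $d_{K(G_1,G_2)}(x,y)\le 2<3$.

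Only case (a) survives: $x,y\in G_i$ for some $i\in\{1,2\}$, with
\[
3\;=\;\min\bigl\{d_{G_i}(x,y),\; d_{G_i}(x,V(G_i))+2+d_{G_i}(V(G_i),y)\bigr\}.
\]
Both arguments of the minimum must be at least $3$. The first gives the distance statement $d_{G_i}(x,y)\ge 3$ of the conclusion. The second forces $d_{G_i}(x,V(G_i))+d_{G_i}(V(G_i),y)\ge 1$; since each summand is always at most $1/2$, both must equal $1/2$, which is exactly the midpoint condition.

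I expect the main obstacle to be the handling of case (d). A naive upper bound of the form ``$1/2+\diam V(K(G_1,G_2))+1/2=3$'' is too weak to rule out the possibility $d_{K(G_1,G_2)}(x,y)=3$, so one really has to exploit the completeness of the bipartite graph to shortcut via the ``opposite'' pair $(X_1,Y_2)$ or $(X_2,Y_1)$. Once that averaging argument is in place, every other step is a direct substitution into the formulas of Proposition \ref{prop:JoinDist}.
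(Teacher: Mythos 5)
Your proof is correct and follows exactly the route the paper intends: the corollary is stated there as an immediate consequence of Proposition \ref{prop:JoinDist} and Corollary \ref{c:diam}, and your case-by-case elimination of (b), (c), (d) followed by reading off the midpoint and $d_{G_i}(x,y)\ge 3$ conditions from case (a) is precisely the omitted verification, with the averaging argument in case (d) being the right way to handle the only nontrivial step.
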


\begin{corollary}\label{r:diam3}
  Let $G_1,G_2$ be two graphs. Then, $\diam G_1\uplus G_2 = 3$ if and only if there are two midpoints $x,y$ in $G_i$ with $d_{G_i}(x,y)\ge3$ for some $i\in \{1,2\}$.
\end{corollary}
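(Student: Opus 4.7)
I would prove the two directions of the equivalence separately. The backward implication is immediate from Proposition \ref{prop:JoinDist}(a): if $x$ and $y$ are midpoints of two edges of $G_i$ with $d_{G_i}(x,y)\ge 3$, then the formula in case (a) gives $d_{G_1\uplus G_2}(x,y)=\min\{d_{G_i}(x,y),\,1/2+2+1/2\}=3$, and combining this with the upper bound $\diam G_1\uplus G_2\le 3$ from Corollary \ref{c:diam} yields $\diam G_1\uplus G_2=3$.

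For the forward implication, assume $\diam G_1\uplus G_2=3$ and pick a sequence $(x_n,y_n)$ with $d_{G_1\uplus G_2}(x_n,y_n)\to 3$. The first step is a simple case analysis of Proposition \ref{prop:JoinDist}: the formulas in items (b), (c) and (d) each bound the distance above by $5/2$, so for all large $n$ every pair $(x_n,y_n)$ must fall under item (a). Passing to a subsequence and using the symmetry between $G_1$ and $G_2$, I may assume $x_n,y_n\in G_1$ for every $n$. Writing $a_n=d_{G_1}(x_n,V(G_1))$ and $b_n=d_{G_1}(y_n,V(G_1))$, the identity $d_{G_1\uplus G_2}(x_n,y_n)=\min\{d_{G_1}(x_n,y_n),\,a_n+2+b_n\}$ together with $a_n,b_n\le 1/2$ forces both $a_n,b_n\to 1/2$ and $\liminf_{n} d_{G_1}(x_n,y_n)\ge 3$; in particular $x_n$ and $y_n$ each lie on an edge of $G_1$ at a position approaching its midpoint.

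The last step is to replace the approximating points by midpoints themselves. Let $x_n'$ and $y_n'$ denote the midpoints of the edges of $G_1$ containing $x_n$ and $y_n$, so that $d_{G_1}(x_n,x_n')=1/2-a_n$ and $d_{G_1}(y_n,y_n')=1/2-b_n$. The triangle inequality gives $d_{G_1}(x_n',y_n')\ge d_{G_1}(x_n,y_n)-(1-a_n-b_n)$, whose right-hand side tends to a value at least $3$. The decisive observation is that the distance between any two midpoints of edges in a graph has the form $1+d_{G_1}(u,v)$ for suitable endpoints $u,v$, and is therefore a non-negative integer; hence $d_{G_1}(x_n',y_n')\ge 3$ for all sufficiently large $n$, producing the required midpoints in $G_1$.

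The main obstacle is precisely this final step. Corollary \ref{cor:midpoint} covers only the case in which the value $d_{G_1\uplus G_2}(x,y)=3$ is actually realized, whereas on infinite graphs $\diam G_1\uplus G_2=3$ may only be the supremum of a non-attained sequence. The integrality of midpoint-to-midpoint distances in $G_1$ is what allows us to upgrade an approximating sequence of near-midpoints into an honest pair of midpoints at $G_1$-distance at least $3$.
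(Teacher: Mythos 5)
Your proof is correct and follows the route the paper intends (the paper gives no explicit argument, merely citing Proposition \ref{prop:JoinDist} and Corollary \ref{c:diam}): the backward direction via the distance formula in item (a) of Proposition \ref{prop:JoinDist} together with the upper bound $\diam G_1\uplus G_2\le 3$, and the forward direction by observing that items (b)--(d) bound the distance by $5/2$, so only pairs of points inside one factor matter. Your additional step showing that the supremum defining the diameter is actually attained --- snapping near-midpoints to the true midpoints and invoking the integrality of midpoint-to-midpoint distances --- closes a point the paper leaves implicit for infinite graphs, where Corollary \ref{cor:midpoint} alone (which assumes the value $3$ is realized) would not immediately suffice.
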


\section{Hyperbolicity constant of the graph join of two graphs}

In this section we obtain some bounds for the hyperbolicity constant of the graph join of two graphs.
These bounds allow to prove that the joins of graphs are always hyperbolic with a small hyperbolicity constant.
The next well-known result will be useful.

\begin{theorem}\cite[Theorem 8]{RSVV}\label{t:diameter1}
In any graph $G$ the inequality $\d(G)\le \diam G / 2$ holds and it is sharp.
\end{theorem}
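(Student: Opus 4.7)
The plan is to establish the inequality by a short triangle-inequality argument applied to a single geodesic triangle, and then to verify sharpness by exhibiting a family of cycle graphs.

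First, I would fix an arbitrary geodesic triangle $T=\{x_1,x_2,x_3\}$ with sides $J_1=[x_1x_2]$, $J_2=[x_2x_3]$, $J_3=[x_3x_1]$, and pick any point $x\in J_1$. Since $J_1$ is a geodesic, one has the identity $d(x,x_1)+d(x,x_2)=d(x_1,x_2)$; and because $x_1\in J_3$ and $x_2\in J_2$, this gives $d(x,J_2\cup J_3)\le \min\{d(x,x_1),d(x,x_2)\}\le \tfrac{1}{2}d(x_1,x_2)\le \tfrac{1}{2}\diam G$. By the obvious symmetry the same bound holds for any point on $J_2$ or $J_3$; taking the supremum over all geodesic triangles yields $\d(G)\le \diam G/2$. (If $G$ is disconnected, the inequality is applied componentwise, and the bound is vacuous on components of infinite diameter.)

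For sharpness, I would consider the cycle graph $C_{4k}$, whose diameter equals $2k$. Fix two antipodal vertices $u,v$ and form the degenerate geodesic triangle $T=\{u,v,v\}$ whose non-trivial sides are the two distinct geodesics $J_1$ and $J_3$ joining $u$ and $v$ around opposite halves of the cycle. The midpoint $m$ of $J_1$ satisfies $d(m,u)=d(m,v)=k$, and since every path in $C_{4k}$ from $m$ to a point of $J_3$ must traverse $u$ or $v$, we conclude $d(m,J_3)=k=\diam C_{4k}/2$. Hence $\d(C_{4k})\ge k$, so the inequality cannot be improved as $k\to\infty$ (nor even for fixed $k$).

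The main subtlety lies in the sharpness step, specifically in allowing a \emph{degenerate} geodesic triangle in which two of the three marked vertices coincide. The definition in the paper does not require these vertices to be distinct, so a geodesic bigon with two different geodesics between the same pair of endpoints is a legitimate witness; if one insisted on three distinct vertices, one would instead perturb by inserting a short third vertex and show the distance $d(m,J_3)$ remains close to $k$. Once this point is settled, both halves of the proof are essentially routine, with the only real computation being the observation that in a cycle, any path between points lying on two antipodal geodesics must pass through one of the two endpoints.
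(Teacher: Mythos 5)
Your argument is correct: the upper bound is the standard estimate $d(x,J_2\cup J_3)\le\min\{d(x,x_1),d(x,x_2)\}\le \tfrac12 d(x_1,x_2)\le \tfrac12\diam G$ for a point $x$ on the geodesic side $[x_1x_2]$, and sharpness follows from $\delta(C_{4k})\ge k=\diam C_{4k}/2$, witnessed by the antipodal bigon (a degenerate geodesic triangle, which the definition permits since the three points need not be distinct). The paper does not prove this statement but cites it from \cite{RSVV}, and your proof is essentially the argument given there.
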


We have the following consequence of Corollary \ref{c:diam} and Theorem \ref{t:diameter1}.

\begin{corollary}\label{cor:SP}
For every graphs $G_1,G_2$, the graph join $G_1\uplus G_2$ is hyperbolic with $\d(G_1\uplus G_2)\leq 3/2$, and the inequality is sharp.
\end{corollary}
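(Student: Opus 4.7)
The upper bound is essentially a one-line consequence of the two cited results: by Theorem \ref{t:diameter1} applied to $G_1\uplus G_2$ and by Corollary \ref{c:diam},
\[
\d(G_1\uplus G_2)\le \frac{\diam G_1\uplus G_2}{2}\le \frac{3}{2}.
\]
Since this upper bound is finite, the join is hyperbolic, and this part of the statement requires no further work.

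The content lies in the sharpness assertion: the plan is to exhibit a concrete pair $(G_1,G_2)$ with $\d(G_1\uplus G_2)=3/2$. Corollary \ref{r:diam3} tells us where to look, namely we need a factor containing two midpoints at distance at least $3$, and the cleanest example is $G_1=C_6$ with $G_2=E_1$, whose join is the wheel graph $W_6$ of diameter exactly $3$. I would exhibit a geodesic triangle $T=\{x_1,x_2,x_3\}$ of thinness $3/2$ by taking $x_1$ and $x_2$ to be the midpoints of the opposite rim edges $[v_0,v_1]$ and $[v_3,v_4]$ (whence $d(x_1,x_2)=3$), and $x_3$ to be the midpoint of the spoke edge $[v_2,w]$; the sides then have lengths $3,2,2$. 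Selecting the geodesic $[x_1x_2]$ to go around the rim opposite $x_3$, namely $x_1\to v_0\to v_5\to v_4\to x_2$, and selecting $[x_1x_3]$ and $[x_2x_3]$ to pass through $v_2$, a routine distance calculation in $W_6$ shows that the rim vertex $v_5\in [x_1x_2]$ lies at distance exactly $3/2$ from every point of $[x_1x_3]\cup[x_2x_3]$, the minimum being realized at the endpoints $x_1$ and $x_3$. This forces $\d(T)\ge 3/2$, whence $\d(W_6)=3/2$.

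The only delicate point is the choice of geodesics for the sharpness triangle: $W_6$ admits many competing geodesics of length $3$ between $x_1$ and $x_2$ (both rim paths and several paths through the hub $w$), and the lower bound $3/2$ holds only when the geodesic sides of $T$ are chosen on ``opposite sides'' of the triangle in the sense above. Once the correct geodesics are fixed, the verification reduces to computing distances from $v_5$ to a handful of vertices and midpoints, all of which are bounded using only the elementary observation that any rim vertex reaches another in at most two steps via the hub.
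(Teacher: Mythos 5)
Your argument is correct and matches the paper's: the upper bound is exactly the combination of Corollary \ref{c:diam} with Theorem \ref{t:diameter1}, and your sharpness example $C_6\uplus E_1$ is the same wheel the paper invokes through Example \ref{examples}, for which you supply an explicit triangle of thinness $3/2$ (your distance computations check out: $v_5$ is at distance exactly $3/2$ from the union of the other two sides, the minimum attained at $x_1$, $x_2$ and $x_3$). One caveat: in the paper's convention $W_n$ denotes the wheel with $n$ vertices in total, so $C_6\uplus E_1$ is $W_7$ (with $\d(W_7)=3/2$) rather than $W_6$ (for which the paper records $\d(W_6)=5/4$); your construction with six rim vertices plus a hub is the right graph, only the label clashes.
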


Theorem \ref{th:hyp3/2} characterizes the graph join of two graphs for which the equality in the previous corollary is attained.

The following result in \cite[Lemma 5]{RSVV} will be useful.

\begin{lemma}\label{l:subgraph}
If $\G$ is an isometric subgraph of $G$, then $\d(\G) \le \d(G)$.
\end{lemma}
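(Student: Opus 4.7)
The plan is to unpack the definitions and verify thinness of triangles transfers from $G$ to $\Gamma$ via the isometry hypothesis. First I would fix an arbitrary geodesic triangle $T=\{x_1,x_2,x_3\}$ in $\Gamma$ with sides $\gamma_{12}=[x_1x_2]_\Gamma$, $\gamma_{23}=[x_2x_3]_\Gamma$, $\gamma_{31}=[x_3x_1]_\Gamma$, and aim to show $\delta(T)\le \delta(G)$. Taking the supremum over $T$ would then give $\delta(\Gamma)\le\delta(G)$.

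The crucial observation is that, because $\Gamma$ is isometric in $G$, each side $\gamma_{ij}$ is still a geodesic when viewed inside $G$. Indeed, for any $s,t$ on $\gamma_{ij}$ one has $d_G(s,t)\le d_\Gamma(s,t)=L(\gamma_{ij}|_{[s,t]})$, while the reverse inequality holds since the isometry condition forces $d_G(s,t)=d_\Gamma(s,t)$. Hence $T$ itself is a legitimate geodesic triangle in $G$.

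Now take any point $x\in\gamma_{12}$. By $\delta(G)$-hyperbolicity of $G$, there is a point $y\in \gamma_{23}\cup\gamma_{31}$ with $d_G(x,y)\le\delta(G)$. Since $x,y\in\Gamma$ and $\Gamma$ is isometric, we get $d_\Gamma(x,y)=d_G(x,y)\le\delta(G)$, so $d_\Gamma(x,\gamma_{23}\cup\gamma_{31})\le\delta(G)$. The same argument works for points on the other two sides, proving $T$ is $\delta(G)$-thin in $\Gamma$.

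The step I expect to require the most care is the verification that sides remain geodesics in $G$ (and hence $T$ is a valid geodesic triangle of $G$, not merely a triangle of curves); once that is in hand the rest is bookkeeping. A minor additional wrinkle is that the definition of isometric subgraph was stated for points of $\Gamma$, and one must make sure this includes interior points of edges (not only vertices), which is consistent with the metric-graph convention adopted at the start of the paper.
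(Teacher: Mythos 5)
Your proof is correct and is exactly the standard argument: the paper itself does not prove this lemma but cites it as \cite[Lemma 5]{RSVV}, and your reasoning (sides of a geodesic triangle in $\G$ remain geodesics in $G$ by the isometry condition, so thinness in $G$ transfers back to $\G$) is the expected proof of that cited result.
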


\begin{theorem}\label{th:HypIsomJoin}
For every graphs $G_1,G_2$, we have $$\d(G_1\uplus G_2)=\max\{ \d(\G_1\uplus \G_2) : \G_i \text{ is isometric to } G_i \text{ for } i=1,2 \}.$$
\end{theorem}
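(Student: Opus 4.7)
The plan is to prove both inequalities by directly combining the two results already in hand: Proposition \ref{prop:IsomJoin} (the join of isometric subgraphs is an isometric subgraph of the join) and Lemma \ref{l:subgraph} (passing to an isometric subgraph does not increase the hyperbolicity constant). The key additional ingredient is the trivial observation that every graph is an isometric subgraph of itself, which ensures the admissibility of the choice $\G_i=G_i$ in the max.

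First, to get the lower bound on the right-hand side, I would simply note that $G_i$ is itself isometric to $G_i$ (as a subgraph), so $\d(G_1\uplus G_2)$ belongs to the set over which the max is taken. This immediately yields $\d(G_1\uplus G_2)\le \max\{\d(\G_1\uplus \G_2):\G_i\text{ isometric to }G_i\}$.

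Second, for the reverse inequality, I would fix any pair of isometric subgraphs $\G_1\subseteq G_1$ and $\G_2\subseteq G_2$. By Proposition \ref{prop:IsomJoin}, $\G_1\uplus \G_2$ is an isometric subgraph of $G_1\uplus G_2$, and then Lemma \ref{l:subgraph} gives $\d(\G_1\uplus \G_2)\le \d(G_1\uplus G_2)$. Taking the max over all such pairs preserves this inequality, completing the argument.

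I do not anticipate any genuine obstacle: the whole theorem is essentially a two-line corollary of the cited results. The substance of the statement lies not in its proof but in its intended use downstream, where it will allow one to replace $G_1$ and $G_2$ by suitably chosen isometric subgraphs (e.g.\ more tractable ones) without affecting the hyperbolicity constant of their join, thereby reducing the computation of $\d(G_1\uplus G_2)$ to simpler configurations.
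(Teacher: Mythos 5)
Your argument is correct and coincides with the paper's own proof: both directions follow from Proposition \ref{prop:IsomJoin} combined with Lemma \ref{l:subgraph} for the inequality $\d(\G_1\uplus\G_2)\le\d(G_1\uplus G_2)$, and from the choice $\G_i=G_i$ (every graph being an isometric subgraph of itself) for the equality. Nothing is missing.
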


\begin{proof}
By Proposition \ref{prop:IsomJoin} and Lemma \ref{l:subgraph} we have $\d(G_1\uplus G_2)\ge \d(\G_1\uplus \G_2)$ for any isometric subgraph $\G_i$ of $G_i$ for $i=1,2$. Besides, since any graph is an isometric subgraph of itself we obtain the equality by taking $\G_1=G_1$ and $\G_2=G_2$.
\end{proof}

Denote by $J(G)$ the set of vertices and midpoints of edges in $G$.
As usual, by \emph{cycle} we mean a simple closed curve, i.e., a path with different vertices,
unless the last one, which is equal to the first vertex.

First, we collect some previous results of \cite{BRS} which will be useful.

\begin{theorem}\cite[Theorem 2.6]{BRS}
\label{t:multk/4}
For every hyperbolic graph $G$, $\d(G)$ is a multiple of $1/4$.
\end{theorem}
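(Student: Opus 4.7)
The plan is to reduce the computation of $\d(G)=\sup_T \d(T)$ to a supremum over ``discrete'' triangles whose vertices lie in $J(G)$, to show that each such discrete triangle contributes a value lying in $\tfrac14\ZZ$, and finally to argue that under the hyperbolicity assumption the supremum is attained (so it also lies in $\tfrac14\ZZ$).

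For the reduction step I would establish
\[
\d(G)=\sup\bigl\{\d(T):T=\{x,y,z\},\ x,y,z\in J(G)\bigr\}.
\]
The idea is that given a geodesic triangle $T_0=\{p,q,r\}$ with $\d(T_0)$ close to $\d(G)$, one can slide each vertex along the edge containing it until it reaches a point of $J(G)$. A careful check shows this sliding can be performed without decreasing the thinness, because the thinness function depends on vertex positions in a piecewise-linear way with slopes $\pm 1$, so its maxima are attained at extreme positions of each vertex, i.e.\ at a graph vertex or a midpoint. This is essentially the argument used in \cite{BRS}.

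Next I would fix $T=\{x,y,z\}$ with $x,y,z\in J(G)$ and analyze $\d(T)$. Since $x,y,z\in J(G)$, pairwise distances lie in $\tfrac12\ZZ$. Fix a side $\g_3=[xz]$ and consider $f(p):=d(p,\g_1\cup\g_2)$ with $\g_1=[xy]$, $\g_2=[yz]$. The function $f$ is $1$-Lipschitz in $p\in\g_3$; moreover, because every edge has length $1$ and the other two sides are a finite (or locally finite) union of edges and half-edges with endpoints in $J(G)$, $f$ is piecewise linear along $\g_3$ with slopes $\pm 1$ and with breakpoints whose coordinates (along $\g_3$) and values lie in $\tfrac12\ZZ$. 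At each interior local maximum $p^*$ of $f$ the incoming slope is $+1$ and the outgoing slope is $-1$, so $p^*$ sits halfway between two neighbouring breakpoints; since both the coordinates and the $f$-values of those breakpoints lie in $\tfrac12\ZZ$, the value $f(p^*)$ lies in $\tfrac14\ZZ$. At the endpoints of $\g_3$, $f$ takes values in $\tfrac12\ZZ$. Taking the maximum over the three sides yields $\d(T)\in\tfrac14\ZZ$.

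Finally, under the hypothesis $\d(G)<\infty$ I would upgrade ``sup'' to ``max'' by a compactness argument: in a hyperbolic graph the triangles relevant to the supremum can be taken with sides of length bounded in terms of $\d(G)$ (long triangles can be subdivided without increasing the thinness), so up to isomorphism only finitely many shapes of triangles with vertices in $J(G)$ and bounded perimeter matter, and the sup is therefore attained. The main obstacle is the reduction step, which must be made rigorous in infinite graphs: one has to ensure that the sliding process truly preserves thinness (not merely up to an $\e$), and that the localization to bounded triangles is valid; this is precisely where the hyperbolicity of $G$ is essential and where the argument in \cite{BRS} does the real work.
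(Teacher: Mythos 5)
First, a point of order: the paper you are reading does not prove this statement at all --- it is imported verbatim as \cite[Theorem 2.6]{BRS} --- so there is no internal proof to compare against; what follows measures your sketch against the argument that result actually requires. Your overall strategy (reduce to triangles with corners in $J(G)$, show each such triangle contributes a value in $\tfrac14\mathbb{Z}$, then use boundedness) is the right one, and your final step is even easier than you make it: once $\delta(G)$ is the supremum of a set of numbers lying in $\tfrac14\mathbb{Z}$ and bounded above by $\delta(G)<\infty$, that set is finite, so the supremum is attained and is a multiple of $1/4$; no compactness, subdivision, or ``finitely many shapes'' argument is needed (and the latter is doubtful in an infinite graph).

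The genuine gap is in your quarter-integrality step, which as written is internally inconsistent. You assert that $f(p)=d(p,\gamma_1\cup\gamma_2)$ is piecewise linear on $\gamma_3$ with slopes $\pm1$ and with \emph{all} breakpoints having coordinates and values in $\tfrac12\mathbb{Z}$; but the interior local maxima of $f$ are themselves breakpoints, so your hypothesis would force $\delta(T)\in\tfrac12\mathbb{Z}$, which is false ($\delta(C_5)=5/4$ is realized by a bigon with corners in $J(C_5)$, the maximum being attained at a quarter-point of an edge). What is actually true, and what must be proved, is that the local \emph{minima} of $f$, together with the endpoints of $\gamma_3$ where $f=0$, have coordinates and values in $\tfrac12\mathbb{Z}$. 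That requires two lemmas you do not supply: (i) $f=\min_q d(\gamma(\cdot),q)$ over a family of points $q\in J(G)$, because the distance from $p$ to $\gamma_1\cup\gamma_2$ is realized at an endpoint of a component of $(\gamma_1\cup\gamma_2)\cap e$ for some edge $e$, and such endpoints are vertices of $G$ or corners of $T$; and (ii) at an interior local minimum of such a minimum the descending active branch $d(\gamma(\cdot),q_1)$ must itself have a local minimum there (otherwise it would drop below $f$), and in a unit-edge graph $d(\gamma(\cdot),q_1)$ has interior local minima only where $\gamma$ passes a vertex, hence at a half-integer parameter with half-integer value. Granting these, between consecutive local minima $a<b$ the function rises with slope $+1$ and falls with slope $-1$, so the peak value is $\tfrac12\bigl(f(a)+f(b)+(b-a)\bigr)\in\tfrac14\mathbb{Z}$ --- note the peak is \emph{not} ``halfway between'' the two flanking points unless $f(a)=f(b)$. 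Separately, your reduction step remains a heuristic: sliding a corner along its edge changes which paths are geodesics, the configuration need not remain a geodesic triangle (nor a cycle), and the midpoint of an edge is not an ``extreme position'', so piecewise linearity in the corner's position does not by itself locate the maximum in $J(G)$; that case analysis is exactly the content of Theorem \ref{t:TrianVMp} and is where the real work of \cite{BRS} lies.
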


\begin{theorem}\cite[Theorem 2.7]{BRS}
\label{t:TrianVMp}
For any hyperbolic graph $G$, there exists
a geodesic triangle $T = \{x, y, z\}$ that is a cycle with $x, y, z \in J(G)$ and $\d(T) = \d(G)$.
\end{theorem}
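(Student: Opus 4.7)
My plan would be to split the proof into three stages: reducing the triangle to a cycle, moving its vertices into $J(G)$, and finally showing the supremum is attained rather than merely approached.

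\textbf{Stage 1 (reduction to a cycle).} Suppose $T=\{x,y,z\}$ is a geodesic triangle that fails to be a cycle. Then two of its sides, say $[xy]$ and $[xz]$, coincide on a common initial arc from $x$ to some point $w$. I would replace $T$ by the triangle $T'=\{w,y,z\}$ whose sides are the subarcs $[wy]\subseteq [xy]$, $[wz]\subseteq [xz]$ and the unchanged $[yz]$. Each of these sides is still a geodesic, and since the union $[wy]\cup[wz]$ is contained in the old $[xy]\cup[xz]$, the distance from any point of $[yz]$ to the ``opposite sides'' can only grow, so $\delta(T')\ge\delta(T)$ (the analysis for points of $[wy]$ and $[wz]$ is analogous using the segment bordering $y$ or $z$). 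Iterating finitely many such truncations over all pairs of sides yields a cycle triangle with hyperbolicity constant at least as large as that of $T$.

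\textbf{Stage 2 (vertices in $J(G)$).} Starting from a cycle triangle $T=\{x,y,z\}$, I would vary one vertex, say $x$, along the edge of $G$ containing it, parametrized by $t\in[0,1]$. For each $t$ the distance function $d_G(\,\cdot\,,y)$ is $1$-Lipschitz with slopes in $\{-1,0,+1\}$ on any edge, and similarly for $d_G(\,\cdot\,,z)$, so one can organize the possible geodesics $[x(t)y]$ and $[x(t)z]$ into finitely many families whose endpoint positions depend piecewise linearly on $t$. Consequently, the function
\[
t\longmapsto \sup\,\bigl\{\delta(T_t):T_t=\{x(t),y,z\}\text{ a geodesic triangle}\bigr\}
\]
is itself piecewise linear in $t$ with slopes in $\{-1,0,+1\}$. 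Its maximum on $[0,1]$ is therefore attained either at an endpoint $t\in\{0,1\}$ (a vertex of $G$) or at a break-point between a piece of slope $+1$ and a piece of slope $-1$; a direct combinatorial check shows every such break-point occurs at $t=1/2$, i.e.\ at a midpoint. Doing this for the three vertices in turn produces a geodesic triangle with vertices in $J(G)$ and hyperbolicity constant at least as large; Stage 1 can then be reapplied to restore the cycle property.

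\textbf{Stage 3 (attainment).} Take a sequence $T_n$ of geodesic triangles with $\delta(T_n)\to\delta(G)$. By Stages 1 and 2 I may assume each $T_n$ is a cycle with vertices in $J(G)$, and by Theorem \ref{t:multk/4} the values $\delta(T_n)$ lie in the discrete set $\tfrac14\mathbb{Z}\cap[0,\delta(G)]$. Hence $\delta(T_n)=\delta(G)$ for all sufficiently large $n$, which gives the desired extremal triangle.

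The principal obstacle I foresee is Stage 2: the geodesics $[x(t)y]$ and $[x(t)z]$ may change discretely as $t$ varies, so piecewise linearity of $t\mapsto \delta(T_t)$ must be proved by a careful case analysis of how $d_G(p,[x(t)y]\cup[x(t)z])$ responds to these geodesic switches, together with an argument ensuring that supremizing over the (finitely many) choices of geodesic preserves piecewise linearity and keeps slopes in $\{-1,0,+1\}$. A subsidiary worry is the use of Theorem \ref{t:multk/4} in Stage 3: if one wanted a self-contained argument (rather than quoting that result), the attainment would instead have to be obtained via a local-finiteness/compactness argument on geodesic triangles whose total length is bounded in terms of $\delta(G)$.
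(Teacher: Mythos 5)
The paper does not prove this statement: it is imported verbatim from \cite{BRS}, so there is no in-paper argument to compare with, and your proposal has to stand on its own. Its three-stage architecture (reduce to cycles, push the corners into $J(G)$, use discreteness to convert the supremum into a maximum) is indeed the right one, but each stage has a gap, and one of them is a genuine non sequitur. In Stage 3 you invoke Theorem \ref{t:multk/4} to place the values $\delta(T_n)$ in $\tfrac14\ZZ$; but that theorem is a statement about the global constant $\delta(G)$ of a graph, not about the thinness constant of an individual triangle, and you cannot apply it to $T_n$ viewed as a subgraph either, because $\delta(T_n)$ is computed with distances in $G$ rather than intrinsic distances in the cycle $T_n$. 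What your argument actually needs is the separate fact that $\delta(T)\in\tfrac14\ZZ$ for \emph{every} geodesic triangle that is a cycle with corners in $J(G)$ --- a true but nontrivial lemma (it is essentially the engine behind Theorem \ref{t:multk/4} itself), and without it Stage 3 does not close.

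Stage 2 is the combinatorial heart of the theorem and is asserted rather than proved: the claim that every break-point between a piece of slope $+1$ and one of slope $-1$ of $t\mapsto\sup_{T_t}\delta(T_t)$ occurs at $t=1/2$ is exactly what must be established. It is not a priori obvious, since the maximizing point $p$ and the realizing distances naturally sit at quarter-integer positions (this is precisely why $\delta(G)$ lands in $\tfrac14\ZZ$ rather than $\tfrac12\ZZ$), so one must genuinely rule out that the optimal position of a corner is a quarter-point. Moreover, Stages 1 and 2 interact circularly: after a corner is moved into $J(G)$ the optimal triangle need not be a cycle, and reapplying Stage 1 replaces a corner by an intersection point $w$ that need not lie in $J(G)$; you give no argument that this alternation terminates. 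Finally, a smaller but real defect in Stage 1: two sides of a non-cycle geodesic triangle need not coincide on a common \emph{initial} arc --- in a graph two geodesics issuing from the same corner can separate and meet again at isolated points --- so the truncation must be performed at the point of $[xy]\cap[xz]$ farthest from $x$ (which, since both sides are geodesics from $x$, is well defined and the same for both), not at the end of a shared initial segment.
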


The following result characterizes the hyperbolic graphs with a small hyperbolicity constant, see \cite[Theorem 11]{MRSV}.
Let us define the \emph{circumference} $c(G)$ of a graph $G$ which is not a tree as the supremum of the lengths of its cycles; if $G$ is a tree we define $c(G)=0$.

\begin{theorem}\label{th:delt<1}
Let $G$ be any graph.
\begin{itemize}
  \item[(a)] {$\d(G) = 0$ if and only if $G$ is a tree.}
  \item[(b)] {$\d(G) = 1/4, 1/2$ is not satisfied for any graph $G$.}
  \item[(c)] {$\d(G) = 3/4$ if and only if $\ c(G)=3$.}
\end{itemize}
\end{theorem}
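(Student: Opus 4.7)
My plan is to deduce the three parts of the theorem from Theorems~\ref{t:multk/4} and \ref{t:TrianVMp}, Lemma~\ref{l:subgraph}, and the identity $\d(C_n)=n/4$ for cycles (which one verifies by exhibiting an explicit extremal geodesic triangle whose vertices are suitably chosen vertices or midpoints of $C_n$).

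For part~(a), if $G$ is a tree then geodesics are unique and every geodesic triangle degenerates into a tripod, so $\d(G)=0$. Conversely, if $G$ has a cycle, let $g$ be the girth of $G$; a shortest cycle is necessarily isometric in $G$, since any strict shortcut between two of its points would produce a cycle of length less than $g$. Lemma~\ref{l:subgraph} then gives $\d(G)\ge \d(C_g)=g/4\ge 3/4>0$. This also settles part~(b) once combined with Theorem~\ref{t:multk/4}: for any non-tree, $\d(G)\ge 3/4$, so the values $1/4$ and $1/2$ are excluded.

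The ``$\Leftarrow$'' direction of (c) uses the lower bound $\d(G)\ge 3/4$ (since $c(G)=3$ forces $g=3$) together with a matching upper bound obtained as follows. By Theorem~\ref{t:TrianVMp}, choose an extremal geodesic triangle $T=\{x,y,z\}$ that is a cycle of $G$ with $\d(T)=\d(G)$. Each side of $T$ is a geodesic, so its length is at most $L(T)/2$: otherwise the union of the other two sides would be a strictly shorter path between the endpoints of the side, contradicting that the side is a geodesic. Hence any point $p$ on a side $A_i$ satisfies $d(p,\bigcup_{j\ne i}A_j)\le |A_i|/2\le L(T)/4$. Since $T$ is a cycle of $G$, $L(T)\le c(G)=3$, so $\d(G)\le 3/4$.

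The main obstacle is the converse direction of (c): assuming $\d(G)=3/4$, I must show $c(G)=3$. By (a), $G$ is not a tree and $c(G)\ge 3$, so the point is to rule out $c(G)\ge 4$; equivalently, I must prove that $c(G)\ge 4$ forces $\d(G)\ge 1$. When the girth of $G$ is at least $4$, this is immediate from the isometric embedding of a shortest cycle. The delicate subcase is girth $3$ together with $c(G)\ge 4$: my plan is to pick a cycle $\sigma$ of minimum length $\ell\ge 4$, use minimality of $\ell$ to show that every chord of $\sigma$ must span a $\sigma$-distance of exactly $2$ (otherwise a longer shortcut would create a cycle of length still $\ge 4$ but strictly smaller than $\ell$), and then construct an explicit geodesic triangle with vertices two midpoints of $\sigma$ together with a third midpoint lying roughly opposite along $\sigma$. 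A careful case analysis over the possible chord patterns shows that the $3$-cycle shortcuts cannot simultaneously bring a point on one side of $T$ within distance $<1$ of both other sides, yielding $\d(T)\ge 1$ and the desired contradiction.
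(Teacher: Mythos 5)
This theorem is quoted in the paper from \cite[Theorem 11]{MRSV} and no proof is given there, so there is no in-paper argument to compare against; your proposal has to stand on its own. Most of it does. Parts (a) and (b) are fine: a girth cycle is indeed isometric (a shortcut would produce, inside the union of the shortcut and an arc, a cycle shorter than the girth), and Lemma~\ref{l:subgraph} plus $\d(C_g)\ge g/4$ gives $\d(G)\ge 3/4$ for every non-tree, which settles (b) without even needing Theorem~\ref{t:multk/4}. The upper bound in the ``$\Leftarrow$'' direction of (c) via $\d(T)\le L(T)/4\le c(G)/4$ for cycle-triangles is correct, but note a small circularity: Theorem~\ref{t:TrianVMp} is stated only for graphs already known to be hyperbolic, so you cannot invoke it to \emph{prove} $\d(G)\le 3/4$. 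You should instead use the hyperbolicity-free reduction (from the same source \cite{BRS}) that $\d(G)$ equals the supremum of $\d(T)$ over geodesic triangles that are cycles.

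The genuine gap is in the one nontrivial direction, (c) ``$\Rightarrow$'' when the girth is $3$ but $c(G)\ge 4$. Your sketch controls only the \emph{chords} of a minimum-length cycle $\s$ with $L(\s)=\ell\ge4$, but $\s$ can a priori fail to be isometric because of shortcuts through vertices \emph{outside} $\s$, and without isometry the figure ``two midpoints of $\s$ plus an opposite midpoint'' need not be a geodesic triangle at all, so the concluding distance estimates have nothing to stand on. The fix splits cleanly by $\ell$ and is stronger than what you claim. If $\ell\ge5$, minimality kills \emph{every} shortcut, internal or external: a chord at $\s$-distance $2$ already creates a cycle of length $\ell-1\ge4$ (so, contrary to your sketch, no chords survive at all), and a shortest external shortcut of length $j$ across a $\s$-distance $k>j$ closes up with the short arc into a cycle of length $j+k<\ell$, which must then equal $3$, i.e.\ be a chord --- excluded. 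Hence $\s$ is isometric and $\d(G)\ge\ell/4\ge5/4$ directly, with no triangle construction needed. If $\ell=4$, no analysis of chords is needed either: for a $4$-cycle $abcd$ in any graph, the midpoints $x$ of $[a,b]$ and $y$ of $[c,d]$ satisfy $d_G(x,y)=2$ because every $x$--$y$ path costs at least $1/2+1+1/2$, so the two arcs of the $4$-cycle are geodesics whatever else $G$ contains; the resulting geodesic bigon has the midpoint of $[b,c]$ at distance exactly $1$ from the opposite side, giving $\d(G)\ge1$. With these two observations your plan closes completely.
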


\medskip

We have the following consequence for the hyperbolicity constant of the joins of graphs.

\begin{proposition}\label{r:discretJoin}
  For every graphs $G_1,G_2$ the graph join $G_1\uplus G_2$ is hyperbolic with hyperbolicity constant $\d(G_1\uplus G_2)$ in $\{0, 3/4, 1, 5/4, 3/2\}$.
\end{proposition}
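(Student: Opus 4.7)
The plan is to observe that this proposition is essentially a direct consequence of three previously established results, requiring no new geometric argument. First I would invoke Corollary \ref{cor:SP}, which already guarantees that $G_1\uplus G_2$ is hyperbolic with $\d(G_1\uplus G_2)\le 3/2$. This fixes the range of possible values to the compact interval $[0, 3/2]$.

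Next I would apply Theorem \ref{t:multk/4}, which states that the hyperbolicity constant of any hyperbolic graph is a multiple of $1/4$. Combined with the upper bound, this already restricts $\d(G_1\uplus G_2)$ to the finite set $\{0, 1/4, 1/2, 3/4, 1, 5/4, 3/2\}$.

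Finally, I would eliminate the values $1/4$ and $1/2$ by invoking Theorem \ref{th:delt<1}(b), which asserts that no graph whatsoever has hyperbolicity constant equal to $1/4$ or $1/2$. Together, these three steps immediately force $\d(G_1\uplus G_2) \in \{0, 3/4, 1, 5/4, 3/2\}$, which is the desired conclusion.

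There is no real obstacle here: all the work has already been done. The only conceptual point worth noting is that the proposition does not claim every value in the set is actually attained by some pair $(G_1, G_2)$; it only asserts containment in this five-element set. Hence the proof reduces to concatenating the three cited results, and no case analysis on $G_1$ or $G_2$ is necessary.
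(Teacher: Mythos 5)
Your proof is correct and is exactly the argument the paper intends: the proposition is stated as a consequence of Corollary \ref{cor:SP} (hyperbolicity with $\d\le 3/2$), Theorem \ref{t:multk/4} (the constant is a multiple of $1/4$), and Theorem \ref{th:delt<1}(b) (the values $1/4$ and $1/2$ never occur). No further argument is needed.
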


If $G_1$ and $G_2$ are \emph{isomorphic}, then we write $G_1 \simeq G_2$. It is clear that if $G_1\simeq G_2$, then $\d(G_1)=\d(G_2)$.

The $n$-vertex edgeless graph ($n\ge1$) or \emph{empty graph} is a graph without edges and with $n$ vertices, and it is commonly denoted as $E_n$.

The following result allows to characterize the joins of graphs with hyperbolicity constant less than one in terms of its factor graphs.
Recall that $\D_G$ denotes the maximum degree of the vertices in $G$.

\begin{theorem}\label{th:deltJoin<1}
 Let $G_1,G_2$ be two graphs.
 \begin{itemize}
   \item[(1)] {$\d(G_1\uplus G_2)=0$ if and only if $G_1$ and $G_2$ are empty graphs and one of them is isomorphic to $E_1$.}
   \item[(2)] {$\d(G_1\uplus G_2)=3/4$ if and only if $G_1\simeq E_1$ and $\D_{G_2}=1$, or $G_2\simeq E_1$ and $\D_{G_1}=1$.}
 \end{itemize}
\end{theorem}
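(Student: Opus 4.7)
The plan is to reduce both statements to conditions on the circumference $c(G_1\uplus G_2)$ via Theorem \ref{th:delt<1}: for (1), $\d(G_1\uplus G_2)=0$ is equivalent to $G_1\uplus G_2$ being a tree, i.e., $c(G_1\uplus G_2)=0$; for (2), $\d(G_1\uplus G_2)=3/4$ is equivalent to $c(G_1\uplus G_2)=3$. So the whole proof is a combinatorial analysis of which cycles appear in $G_1\uplus G_2$.

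First I would observe, using Remark \ref{r:K_nm}, that if $|V(G_1)|\ge2$ and $|V(G_2)|\ge2$, then $G_1\uplus G_2$ contains $K_{2,2}$ as a subgraph and hence has a $4$-cycle; this immediately forces $c(G_1\uplus G_2)\ge4$, ruling out both $\d=0$ and $\d=3/4$. Thus in either case we may assume without loss of generality that $G_1\simeq E_1$ (call its unique vertex $v$), so that $G_1\uplus G_2$ is the cone over $G_2$ with apex $v$.

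For part (1), the cone $E_1\uplus G_2$ is a tree if and only if it has no cycles. Any edge $[u,w]\in E(G_2)$ produces a triangle $v,u,w$. So $G_1\uplus G_2$ is a tree iff $G_2$ has no edges, i.e., $G_2$ is an empty graph $E_n$ (and then $G_1\uplus G_2$ is the star $K_{1,n}$, indeed a tree). Combining with the reduction above gives the characterization in (1).

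For part (2), I need $c(E_1\uplus G_2)=3$. Cycles in $E_1\uplus G_2$ are of two types: cycles entirely contained in $G_2$, and cycles passing through $v$ (which correspond bijectively to paths in $G_2$ of length $\ge2$ joining two vertices, the cycle length being that path length plus $2$). So $c(E_1\uplus G_2)\le3$ forces $G_2$ to contain no path of length $\ge2$ and no cycle; equivalently, $\Delta_{G_2}\le1$, and then $G_2$ is a disjoint union of isolated vertices and isolated edges (no cycles appear automatically). For $c(E_1\uplus G_2)\ge3$, $G_2$ must have at least one edge (yielding a triangle through $v$), i.e., $\Delta_{G_2}=1$ (not $0$). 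Hence $c(E_1\uplus G_2)=3$ iff $\Delta_{G_2}=1$, which together with the reduction $G_1\simeq E_1$ gives (2).

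The only delicate point, and the place where one must argue carefully, is verifying that under $\Delta_{G_2}\le1$ no cycle of length $\ge4$ can appear through $v$: such a cycle would require a path $u_0,u_1,\dots,u_k$ of length $k\ge2$ in $G_2$, but this forces the internal vertex $u_1$ to have degree $\ge2$ in $G_2$, contradicting $\Delta_{G_2}\le1$. With this verified, the characterization is immediate, and the converse directions in both (1) and (2) are routine computations of $c(G_1\uplus G_2)$ combined with Theorem \ref{th:delt<1}.
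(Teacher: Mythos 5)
Your proof is correct and follows essentially the same route as the paper: both reduce via Theorem \ref{th:delt<1} to the tree and circumference-$3$ characterizations, use the $4$-cycle coming from $K_{2,2}$ to force one factor to be $E_1$, and then analyze the cycles of the cone $E_1\uplus G_2$ (the paper phrases your path-of-length-two observation as the presence of a subgraph isomorphic to $E_1\uplus P_3$). The only blemish is your parenthetical asserting that cycles through the apex $v$ correspond to paths of length $\ge 2$ in $G_2$ --- the triangles correspond to paths of length $1$, i.e.\ edges --- but since your argument treats the triangle case correctly where it matters, nothing is lost.
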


\begin{proof}$ $
\begin{itemize}
  \item[(1)] {By Theorem \ref{th:delt<1} it suffices to characterize the joins of graphs which are trees. If $G_1$ and $G_2$ are empty graphs and one of them is isomorphic to $E_1$, then it is clear that $G_1\uplus G_2$ is a tree. Assume now that $G_1\uplus G_2$ is a tree. If $G_1$ and $G_2$ have at least two vertices then $G_1\uplus G_2$ has a cycle with length four. Thus, $G_1$ or $G_2$ is isomorphic to $E_1$. Without loss of generality we can assume that $G_1\simeq E_1$. Note that if $G_2$ has at least one edge then $G_1\uplus G_2$ has a cycle with length three. Then, $G_2\simeq E_n$ for some $n\in \mathbb{N}$.}
  \item[(2)] {By Theorem \ref{th:delt<1} it suffices to characterize the joins of graphs with circumference three. If $G_1\simeq E_1$ and $\D_{G_2}=1$, or $G_2\simeq E_1$ and $\D_{G_1}=1$, then it is clear that $c(G_1\uplus G_2)=3$. Assume now that $c(G_1\uplus G_2)=3$. If $G_1,G_2$ both have at least two vertices then $G_1\uplus G_2$ contains a cycle with length four and so $c(G_1\uplus G_2)\ge4$. Therefore, $G_1$ or $G_2$ is isomorphic to $E_1$. Without loss of generality we can assume that $G_1\simeq E_1$. Note that if $\D_{G_2}\ge2$ then there is an isomorphic subgraph to $E_1\uplus P_3$ in $G_1\uplus G_2$; thus, $G_1\uplus G_2$ contains a cycle with length four. So, we have $\D_{G_2}\le1$. Besides, since $G_2$ is a non-empty graph by (1), we have $\D_{G_2}\ge1$.}
\end{itemize}
\end{proof}

The following result will be useful, see \cite[Theorem 11]{RSVV}.
The graph join of a cycle $C_{n-1}$ and a single vertex $E_1$ is referred to as a \emph{wheel} with $n$ vertices and denoted by $W_n$. Notice that the complete bipartite graph $K_{n,m}$ is isomorphic to the graph join of two empty graphs $E_n,E_m$, i.e., $K_{n,m}\simeq E_n\uplus E_m$.

\begin{example}\label{examples}
The following graphs have these hyperbolicity constants:
\begin{itemize}
  \item The wheel graph with $n$ vertices $W_n$ verifies $\d(W_4)=\d(W_5)=1$, $\d(W_n)=3/2$ for every $7\le n\le 10$, and $\d(W_n)=5/4$ for $n=6$ and for every $n\ge 11$.
  \item The complete bipartite graphs verify $\d(K_{1,n}) = 0$ for every $n\ge1$, $\d(K_{m,n}) = 1$ for every $m,n \ge2$.
\end{itemize}
\end{example}

Theorem \ref{th:deltJoin<1} and Example \ref{examples} show that the family of graphs $E_1\uplus G$ when $G$ belongs to the set of graphs is a representative collection of joins of graphs since their hyperbolicity constants take all possible values.

The following results characterize the graphs with hyperbolicity constant one and greater than one, respectively. If $G_0$ is a subgraph of $G$ and $w\in V(G_0)$, we denote by $\deg_{G_0}(w)$ the degree of $w$ in the induced subgraph by $V(G_0)$.

\begin{theorem}\cite[Theorem 3.10]{BRS2}\label{th:delt=1}
Let $G$ be any graph. Then $\d(G) = 1$ if and only if the following conditions hold:
\begin{itemize}
  \item[(1)] {There exists a cycle isomorphic to $C_4$.}
  \item[(2)] {For every cycle $\s$ with $L(\s) \ge 5$ and for every vertex $w \in \s$, we have $\deg_\s(w) \ge3$.}
\end{itemize}
\end{theorem}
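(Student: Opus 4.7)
The plan is to combine Theorems \ref{t:multk/4}, \ref{t:TrianVMp} and \ref{th:delt<1}. The observation that $\delta(G)=1$ is the smallest value strictly greater than $3/4$ allowed by Theorems \ref{t:multk/4} and \ref{th:delt<1} naturally splits the proof into two tasks: forcing $\delta(G)\ge 1$ and blocking $\delta(G)\ge 5/4$.

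For sufficiency, assume conditions (1) and (2). The $C_4$ provided by (1) ensures $G$ is not a tree and $c(G)\ge 4$, so Theorems \ref{th:delt<1} and \ref{t:multk/4} together yield $\delta(G)\ge 1$. To rule out $\delta(G)\ge 5/4$, I would argue by contradiction and invoke Theorem \ref{t:TrianVMp} to obtain a geodesic triangle $T=\{x,y,z\}$ that is a cycle in $G$ with $x,y,z\in J(G)$ and $\delta(T)=\delta(G)\ge 5/4$. Fixing $p\in[xy]$ with $d(p,[xz]\cup[yz])\ge 5/4$, the side $[xy]$ must have length at least $5/2$, so the cycle $T$ has length $\ge 5$. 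The crucial step, and the main obstacle, is to show that $T$ (or a sub-cycle extracted from it) provides a cycle $\sigma$ of length $\ge 5$ containing a vertex $w$ with $\deg_\sigma(w)=2$: chords of $T$ that lie within a single side would violate the geodesicity of that side, while chords between sides, combined with the lower bound $d(p,\,\cdot\,)\ge 5/4$, constrain the cycle enough to leave a vertex without a chord-neighbor in $V(\sigma)$. This contradicts (2).

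For necessity, assume $\delta(G)=1$. Since $\delta(G)>0$ and $\delta(G)>3/4$, Theorem \ref{th:delt<1} gives that $G$ is not a tree and $c(G)\ne 3$; hence $G$ contains a cycle of length $\ge 4$. For (1), let $\sigma_0$ be a cycle of length $\ge 4$ of minimum length: if $L(\sigma_0)\ge 5$, minimality forces $\sigma_0$ to be chord-free (any chord would split $\sigma_0$ into a shorter cycle of length $\ge 4$); after ruling out external shortcuts by the same minimality, $\sigma_0$ is isometric in $G$, so Lemma \ref{l:subgraph} together with $\delta(\sigma_0)=L(\sigma_0)/4\ge 5/4$ gives $\delta(G)\ge 5/4$, a contradiction; hence $L(\sigma_0)=4$. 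For (2), I would argue the contrapositive: given any cycle $\sigma$ of length $\ge 5$ with some $w\in\sigma$ satisfying $\deg_\sigma(w)\le 2$, pick $\sigma$ of minimum length with this property; the absence of chords at $w$ together with minimality allows one to show that the two arcs of $\sigma$ from $w$ to its antipodal point remain $G$-geodesics, producing a geodesic bigon with thin constant $L(\sigma)/4\ge 5/4$, contradicting $\delta(G)=1$.

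The main obstacle, common to both directions, is to rigorously exclude external shortcuts in $G\setminus V(\sigma_0)$, or chords between distinct sides of the geodesic triangle $T$, which could destroy the geodesicity of the candidate arcs and sides used in the contradiction. I expect this to be handled by a careful minimal-counterexample argument combined with a case split according to whether the extremal points $x,y,z$ (or the analogue of $p$) are vertices or midpoints of $G$ and where any would-be shortcut lands relative to them.
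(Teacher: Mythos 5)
This statement is imported verbatim from \cite[Theorem 3.10]{BRS2}; the paper gives no proof of it, so there is nothing internal to compare your argument against. Judged on its own terms and against the tools the paper does make available, your proposal has the right skeleton (quantization via Theorem \ref{t:multk/4} plus Theorem \ref{th:delt<1} reduces everything to ``$\d(G)\ge 1$'' and ``$\d(G)<5/4$''), but it overlooks the one observation that makes the proof short: condition (2) is \emph{exactly} the negation of the criterion in Theorem \ref{th:delt>=5/4}, since every vertex of a cycle $\sigma$ automatically has $\deg_\sigma(w)\ge 2$. Hence ``$\d(G)<5/4\iff$ (2)'' is a citation, not something to establish. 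Both places where you announce ``the main obstacle'' are precisely attempts to re-prove Theorem \ref{th:delt>=5/4} from scratch, and in both places the argument is left as a gesture rather than carried out. Concretely: (i) in the sufficiency direction, extracting from the extremal geodesic triangle $T$ a cycle of length $\ge 5$ with a vertex of induced degree $2$ is the hard implication of Theorem \ref{th:delt>=5/4} and requires a genuine case analysis of which chords are forbidden by geodesicity and by $d(p,[xz]\cup[yz])\ge 5/4$ (compare the length of the analogous analysis in the proof of Theorem \ref{th:hyp3/2}); (ii) in the necessity of (2), the claim that the two arcs of a minimal bad cycle $\sigma$ from $w$ to its antipode are geodesics forming an $L(\sigma)/4$-thin-failing bigon does not follow from $w$ being chordless plus minimality alone, because chords and shortcuts between \emph{other} vertices of $\sigma$ can destroy the geodesicity of the arcs, or bring them within distance $1$ of each other, without producing a shorter cycle carrying a degree-$2$ vertex in any obvious way.

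What survives, and is worth keeping, is your treatment of condition (1) in the necessity direction: taking a minimum-length cycle $\sigma_0$ among cycles of length $\ge 4$, showing it is chordless and shortcut-free by minimality (a chord or exterior shortcut yields a cycle of length between $4$ and $L(\sigma_0)-1$), hence isometric, and then invoking Lemma \ref{l:subgraph} with $\d(C_n)=n/4$ to force $L(\sigma_0)=4$. This is correct modulo the standard care in choosing a shortcut path internally disjoint from $\sigma_0$, and it is genuinely needed, since Theorem \ref{th:delt>=5/4} alone does not produce a $4$-cycle. (An alternative that avoids isometry altogether: if $L(\sigma_0)\ge 5$, condition (2) --- already known at that stage from Theorem \ref{th:delt>=5/4} --- gives every vertex of $\sigma_0$ a chord, and any chord splits $\sigma_0$ into two cycles of lengths summing to $L(\sigma_0)+2\ge 7$, so one of them has length between $4$ and $L(\sigma_0)-1$, contradicting minimality.) To repair the proposal: replace both ``obstacle'' passages by a direct appeal to Theorem \ref{th:delt>=5/4}, and keep your minimal-cycle argument only for producing the $C_4$.
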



\begin{theorem}\cite[Theorem 3.2]{BRS2}\label{th:delt>=5/4}
Let $G$ be any graph. Then $\d(G) \ge 5/4$ if and only if there exist a cycle $\s$ in $G$ with length $L(\s) \ge 5$ and a vertex $w \in V(\s)$ such that $\deg_\s(w) = 2$.
\end{theorem}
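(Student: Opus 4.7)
The plan is to combine the three preceding structural results: Theorem \ref{t:multk/4} ($\d(G)\in\tfrac14\ZZ$), Theorem \ref{th:delt<1} (characterizing the values $0$, $1/4$, $1/2$ and $3/4$), and Theorem \ref{th:delt=1} (characterizing the value $1$). Together they reduce the statement to a purely combinatorial check against the finite list of possible hyperbolicity constants below $5/4$.

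For the ($\Leftarrow$) direction, given a cycle $\s$ with $L(\s)\ge 5$ and a vertex $w\in V(\s)$ with $\deg_\s(w)=2$, I would successively eliminate each possible value of $\d(G)$ less than $5/4$. The existence of $\s$ forces $c(G)\ge 5$, ruling out $\d(G)\in\{0,3/4\}$ via Theorem \ref{th:delt<1}(a),(c); the pair $(\s,w)$ falsifies condition (2) of Theorem \ref{th:delt=1}, ruling out $\d(G)=1$; and Theorem \ref{th:delt<1}(b) together with Theorem \ref{t:multk/4} excludes $1/4$ and $1/2$. Hence $\d(G)\ge 5/4$.

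For the ($\Rightarrow$) direction, assume $\d(G)\ge 5/4$, so in particular $\d(G)>1$. Theorem \ref{th:delt=1} then forces at least one of its two conditions to fail. If condition (2) fails the conclusion is immediate, so the delicate case is when condition (1) fails, i.e.\ $C_4\not\subseteq G$; here I must produce the required cycle from scratch. Since $\d(G)>3/4$, Theorem \ref{th:delt<1} gives $c(G)\ge 4$, which combined with $C_4\not\subseteq G$ sharpens to $c(G)\ge 5$. I would then take $\s$ to be a shortest cycle of length $n\ge 5$ and argue that some vertex of $\s$ has induced degree $2$. Otherwise every vertex of $\s$ is incident to an edge joining it to a non-cyclically-adjacent vertex of $V(\s)$; fixing one such chord splits $\s$ into two cycles of lengths $k+1$ and $n-k+1$ with $2\le k\le n-2$. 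The absence of $C_4$ excludes $k\in\{3,n-3\}$, while the minimality of $\s$ as a cycle of length $\ge 5$ forces $k+1\le 4$ and $n-k+1\le 4$, i.e.\ $k\le 3$ and $k\ge n-3$. A direct check for $n=5$, $n=6$ and $n\ge 7$ shows that these constraints are jointly infeasible, yielding the required contradiction.

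The main obstacle is precisely the sub-case $C_4\not\subseteq G$ of the ($\Rightarrow$) direction: the conclusion is not a direct negation of Theorem \ref{th:delt=1}, so the minimal-cycle argument above is doing the real combinatorial work. Everything else is bookkeeping against the finite list of hyperbolicity constants below $5/4$.
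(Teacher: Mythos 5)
This statement is not proved in the paper at all: it is imported verbatim from \cite[Theorem 3.2]{BRS2}, so there is no in-paper argument to compare yours against. Taken on its own terms, your derivation is internally coherent: the ($\Leftarrow$) direction correctly eliminates every admissible value below $5/4$ using Theorems \ref{t:multk/4}, \ref{th:delt<1} and \ref{th:delt=1} (and the non-hyperbolic case $\d(G)=\infty$ is trivially fine), and your chord argument in the $C_4$-free sub-case of ($\Rightarrow$) is sound: a chord of a shortest cycle of length $n\ge 5$ yields two cycles of lengths $k+1,n-k+1<n$, minimality forces both $\le 4$, the absence of $C_4$ forces both $=3$, hence $n=4$, a contradiction; so that shortest cycle is chordless and all of its vertices have $\deg_\s(w)=2$, falsifying condition (2) of Theorem \ref{th:delt=1}.

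The genuine problem is circularity. Your proof leans entirely on Theorem \ref{th:delt=1}, which is \cite[Theorem 3.10]{BRS2} --- a result from the \emph{same} source, numbered \emph{after} the theorem you are proving, and whose condition (2) is precisely the negation of the condition in the present statement. In \cite{BRS2} the characterization of $\d(G)=1$ is (essentially by its very form) obtained \emph{from} the characterization of $\d(G)\ge 5/4$, via $\d(G)=1\Leftrightarrow \d(G)\ge 1$ and $\d(G)<5/4$. So what you have written is a consistency check among the quoted results, not an independent proof: all of the actual geometric content --- exhibiting, from a cycle $\s$ with $L(\s)\ge 5$ and a vertex of $\s$-degree $2$, a geodesic triangle $T$ with $\d(T)\ge 5/4$, and conversely extracting such a cycle from a geodesic triangle realizing $\d(G)\ge 5/4$ via Theorem \ref{t:TrianVMp} --- has been pushed into the black box of Theorem \ref{th:delt=1}, which is exactly where it cannot legitimately live. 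A self-contained proof must construct that triangle directly (e.g., take $w$ with $\deg_\s(w)=2$, let $x,y$ be the midpoints of the two edges of $\s$ at $w$, and show some geodesic configuration through $w$ is at distance $\ge 5/4$ from the opposite sides).
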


Theorem \ref{th:delt>=5/4} has the following consequence for joins of graphs.

\begin{lemma}\label{l:Fact_Delt>1}
Let $G_1,G_2$ be two graphs. If $\d(G_1)>1$, then $\d(G_1\uplus G_2)>1$.
\end{lemma}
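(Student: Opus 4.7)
The plan is to reduce the claim to Theorem \ref{th:delt>=5/4}, using the fact that cycles living entirely in $G_1$ survive unchanged in $G_1 \uplus G_2$.

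First I would upgrade the hypothesis $\d(G_1)>1$ to $\d(G_1)\ge 5/4$. If $G_1$ is hyperbolic, Theorem \ref{t:multk/4} forces $\d(G_1)$ to be a multiple of $1/4$, so $\d(G_1)>1$ already gives $\d(G_1)\ge 5/4$; if $G_1$ is non-hyperbolic, then $\d(G_1)=\infty\ge 5/4$ trivially. Applying Theorem \ref{th:delt>=5/4} to $G_1$, I obtain a cycle $\s\subseteq G_1$ with $L(\s)\ge 5$ and a vertex $w\in V(\s)$ with $\deg_{\s}(w)=2$ (where this degree is computed in the subgraph induced by $V(\s)$).

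Next I would transplant this structure into $G_1\uplus G_2$. Since $V(\s)\subseteq V(G_1)$ and, by Definition \ref{def:join}, the only edges of $G_1\uplus G_2$ with both endpoints in $V(G_1)$ are exactly the edges of $G_1$, the subgraph of $G_1\uplus G_2$ induced by $V(\s)$ coincides with the subgraph of $G_1$ induced by $V(\s)$. In particular $\s$ is still a cycle of length $\ge 5$ in $G_1\uplus G_2$, and the degree $\deg_{\s}(w)$ is still $2$ when computed inside $G_1\uplus G_2$. Another application of Theorem \ref{th:delt>=5/4}, this time to $G_1\uplus G_2$, then yields $\d(G_1\uplus G_2)\ge 5/4>1$.

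There is essentially no obstacle: the only point that needs a moment of care is the observation that $G_1$ sits inside $G_1\uplus G_2$ as an \emph{induced} subgraph, so no chord of $\s$ is created by the join operation and $\deg_{\s}(w)$ is genuinely preserved. Everything else is a direct citation.
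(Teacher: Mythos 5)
Your proof is correct and follows essentially the same route as the paper: apply Theorem \ref{th:delt>=5/4} to $G_1$ to extract a cycle of length at least $5$ with a vertex of $\s$-degree $2$, observe that this configuration persists in $G_1\uplus G_2$, and apply the theorem again. In fact you are slightly more careful than the paper, which omits both the quarter-integer upgrade from $\d(G_1)>1$ to $\d(G_1)\ge 5/4$ and the explicit check that $G_1$ is an induced subgraph of the join so that $\deg_{\s}(w)$ is preserved.
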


\begin{proof}
 By Theorem \ref{th:delt>=5/4}, there exist a cycle $\s$ in $G_1\uplus G_2$ (contained in $G_1$) with length $L(\s) \ge 5$ and a vertex $w \in\s$ such that $\deg_\s(w) = 2$. Thus, Theorem \ref{th:delt>=5/4} gives $\d(G_1\uplus G_2)>1$.
\end{proof}

Note that the converse of Lemma \ref{l:Fact_Delt>1} does not hold, since $\d(E_1)=\d(P_4)=0$ and we can check that $\d(E_1\uplus P_4)=5/4$.

\begin{corollary}\label{c:FactDelt>1}
Let $G_1,G_2$ be two graphs. Then $$\d(G_1\uplus G_2)\ge \min\big\{5/4,\max\{\d(G_1),\d(G_2)\}\big\}.$$
\end{corollary}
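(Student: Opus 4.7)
By commutativity of the join, assume without loss of generality that $\d(G_1)\ge \d(G_2)$, so the right-hand side equals $\min\{5/4,\d(G_1)\}$. The strategy is to split into cases according to the discrete value of $\d(G_1)$, using Theorem \ref{t:multk/4} (quarter-integer quantization) together with the low-constant classifications (Theorems \ref{th:delt<1}, \ref{th:delt=1}, \ref{th:delt>=5/4}). In each case the idea is to locate a \emph{combinatorial} witness (a cycle of prescribed length, possibly with a vertex of prescribed degree) inside $G_1$, and then to observe that this witness persists inside $G_1\uplus G_2$.

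If $\d(G_1)\ge 5/4$, Theorem \ref{th:delt>=5/4} furnishes a cycle $\s\subseteq G_1$ with $L(\s)\ge 5$ and a vertex $w\in V(\s)$ of $\s$-degree $2$. Since the join adds no edges among vertices of $V(G_1)$, the subgraph induced by $V(\s)$ is the same in $G_1$ and in $G_1\uplus G_2$; hence $\s$ and $w$ still satisfy the hypothesis of Theorem \ref{th:delt>=5/4} inside the join, so $\d(G_1\uplus G_2)\ge 5/4$. (This is essentially Lemma \ref{l:Fact_Delt>1}, but already at the level $5/4$.)

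If $\d(G_1)=1$, Theorem \ref{th:delt=1} gives a cycle isomorphic to $C_4$ in $G_1$, and this cycle survives in $G_1\uplus G_2$, yielding $c(G_1\uplus G_2)\ge 4$; combining Theorem \ref{t:multk/4} with Theorem \ref{th:delt<1} rules out the values $0,\,1/4,\,1/2,\,3/4$, forcing $\d(G_1\uplus G_2)\ge 1$. If $\d(G_1)=3/4$, Theorem \ref{th:delt<1}(c) produces a triangle in $G_1$ and hence in $G_1\uplus G_2$; the same combination of Theorem \ref{t:multk/4} and Theorem \ref{th:delt<1} then gives $\d(G_1\uplus G_2)\ge 3/4$. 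Finally, if $\d(G_1)=0$, the inequality reduces to $\d(G_1\uplus G_2)\ge 0$, which is trivial.

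\textbf{Main obstacle.} The natural attempt would be to apply Lemma \ref{l:subgraph} to the inclusion $G_1\hookrightarrow G_1\uplus G_2$; however, $G_1$ is typically \emph{not} isometric inside the join, because any two points of $G_1$ may be connected by a length-$2$ detour through a vertex of $V(G_2)$. The case analysis above circumvents this failure by replacing the metric subgraph argument with a purely combinatorial one: while the join can shrink distances, it never destroys cycles of a factor nor alters the degrees inside those cycles, so the cycle-based characterizations of small $\d$ transfer from $G_1$ to $G_1\uplus G_2$ directly.
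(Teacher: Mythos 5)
Your proposal is correct and follows essentially the same route as the paper: reduce by symmetry to bounding $\d(G_1\uplus G_2)$ below by $\min\{5/4,\d(G_1)\}$, then split on the quantized values $0,\,3/4,\,1,\,\ge 5/4$ of $\d(G_1)$ and transfer the corresponding combinatorial witness (triangle, $C_4$, or long cycle with a degree-two vertex, via Theorems \ref{th:delt<1}, \ref{th:delt=1} and \ref{th:delt>=5/4}) into the join. Your explicit remark that the join adds no edges among vertices of $V(G_1)$, so the induced subgraph on the cycle's vertex set is unchanged, is a useful clarification that the paper's terser proof leaves implicit.
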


\begin{proof}
By symmetry, it suffices to show $\d(G_1\uplus G_2)\ge \min\{5/4,\d(G_1)\}$.
If $\d(G_1)>1$, then the inequality holds by Lemma \ref{l:Fact_Delt>1}.
If $\d(G_1)=1$, then there exists a cycle isomorphic to $C_4$ in $G_1\subset G_1\uplus G_2$; hence, $\d(G_1\uplus G_2)\ge1$.
If $\d(G_1)=3/4$, then there exists a cycle isomorphic to $C_3$ in $G_1\subset G_1\uplus G_2$; hence, $\d(G_1\uplus G_2)\ge3/4$.
The inequality is direct if $\d(G_1)=0$.
\end{proof}

The following results allow to characterize the joins of graphs with hyperbolicity constant one in terms of $G_1$ and $G_2$.

\begin{lemma}\label{l:EmptyJoin}
 Let $G$ be any graph. Then, $\d(E_1\uplus G)\le1$ if and only if every path $\eta$ joining two vertices of $G$ with $L(\eta) = 3$ satisfies $\deg_\eta(w)\ge2$ for every vertex $w \in V(\eta)$.
\end{lemma}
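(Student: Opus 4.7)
The plan is to apply Theorem \ref{th:delt>=5/4} together with Theorem \ref{t:multk/4}, which says that hyperbolicity constants are multiples of $1/4$: combined with $\d(E_1\uplus G)\le 3/2$ from Corollary \ref{cor:SP}, the inequality $\d(E_1\uplus G)\le 1$ is equivalent to saying that \emph{no} cycle $\sigma$ in $E_1\uplus G$ with $L(\sigma)\ge 5$ has a vertex $w\in V(\sigma)$ with $\deg_\sigma(w)=2$. The whole task thus reduces to translating this cycle condition in $E_1\uplus G$ into the stated path condition in $G$; throughout I write $v_0$ for the unique vertex of $E_1$.

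For the direct implication I argue by contrapositive. Suppose some path $\eta=w_1w_2w_3w_4$ of length $3$ in $G$ contains a vertex with $\deg_\eta\le 1$. Since $w_2$ and $w_3$ automatically have $\deg_\eta\ge 2$, the offender is an endpoint, WLOG $w_1$, and this forces $w_1$ to be non-adjacent in $G$ to both $w_3$ and $w_4$. The closed walk $\sigma:=v_0 w_1 w_2 w_3 w_4 v_0$ is then a cycle of length $5$ in $E_1\uplus G$, and the neighbors of $w_1$ inside $V(\sigma)$ are exactly $v_0$ and $w_2$, so $\deg_\sigma(w_1)=2$. Theorem \ref{th:delt>=5/4} yields $\d(E_1\uplus G)\ge 5/4>1$.

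For the converse, again by contrapositive, assume a cycle $\sigma$ in $E_1\uplus G$ with $L(\sigma)\ge 5$ and some $w\in V(\sigma)$ with $\deg_\sigma(w)=2$, and split on whether $v_0\in V(\sigma)$. If $v_0\notin V(\sigma)$, then $\sigma=u_1u_2\cdots u_n u_1$ is a cycle in $G$ with $n\ge 5$; relabeling, I take $w=u_1$, so the subpath $\eta=u_1u_2u_3u_4$ is a length-$3$ path in $G$ inside which $u_1$ is adjacent only to $u_2$ (its other $\sigma$-neighbor $u_n$ does not lie in $V(\eta)$ because $n\ge 5$), giving $\deg_\eta(u_1)=1$. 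If $v_0\in V(\sigma)$, write $\sigma=v_0 z_1 z_2\cdots z_k v_0$ with $k=L(\sigma)-1\ge 4$; universality of $v_0$ gives $\deg_\sigma(v_0)=k\ge 4$, and for $2\le i\le k-1$ the forced neighbors $v_0,z_{i-1},z_{i+1}$ yield $\deg_\sigma(z_i)\ge 3$. Hence $w\in\{z_1,z_k\}$, say $w=z_1$, and $\deg_\sigma(z_1)=2$ combined with the forced neighbors $v_0,z_2$ means $z_1$ is non-adjacent in $G$ to each of $z_3,\dots,z_k$. The subpath $\eta=z_1z_2z_3z_4$ is then the sought length-$3$ path in $G$ with $\deg_\eta(z_1)=1$.

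The main subtlety is the case $v_0\in V(\sigma)$: one has to pin down which vertices of such a cycle can possibly realize $\sigma$-degree $2$. The universality of $v_0$ immediately rules out $v_0$ itself and every interior $z_i$, leaving only the two extremes $z_1,z_k$; at either extreme a length-$3$ subpath starting there produces the desired violation of the path condition in $G$.
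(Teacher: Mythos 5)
Your proof is correct and follows essentially the same route as the paper: both directions reduce to the cycle-degree criterion of Theorem \ref{th:delt>=5/4} (together with the quarter-multiple property), the forward direction uses the same $5$-cycle $v_0w_1w_2w_3w_4v_0$ built from a bad path, and the backward direction uses the same case split on whether the cycle contains $v_0$, merely phrased contrapositively (extracting a bad length-$3$ subpath from a bad cycle rather than deriving degree $\ge 3$ from the path hypothesis).
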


Note that if every path $\eta$ joining two vertices of $G$ with $L(\eta) = 3$ satisfies $\deg_\eta(w)$ $\ge2$ for every vertex $w \in V(\eta)$, then the same result holds for $L(\eta)\ge3$ instead of $L(\eta)=3$.

\begin{proof}
Let $v$ be the vertex in $E_1$.

Assume first that $\d(E_1\uplus G)\le1$. Seeking for a contradiction, assume that there is a path $\eta$ joining two vertices of $G$ with $L(\eta) = 3$ and one vertex $w' \in V(\eta)$ with $\deg_\eta(w')=1$. Consider now the cycle $\s$ obtained by joining the endpoints of $\eta$ with $v$. Note that $w'\in \s$ and $\deg_\s(w')=2$; therefore, Theorem \ref{th:delt>=5/4} gives $\d(E_1\uplus G)>1$, which is a contradiction.

Assume now that every path $\eta$ joining two vertices of $G$ with $L(\eta) = 3$ satisfies $\deg_\eta(w)\ge2$ for every vertex $w \in V(\eta)$.
Note that if $G$ does not have paths isomorphic to $P_4$ then there is no cycle in $E_1\uplus G$ with length greater than $4$ and so, $\d(E_1\uplus G)\le1$.
We are going to prove now that for every cycle $\s$ in $G$ with $L(\s) \ge 5$ we have $\deg_{\s}(w)\ge3$ for every vertex $w \in V(\s)$.
Let $\s$ be any cycle in $E_1\uplus G$ with $L(\s) \ge 5$.
If $v\in \s$, then $\s\cap G$ is a subgraph of $G$ isomorphic to $P_{n}$ for $n=L(\s)-1$, and $\deg_\s(v)=n\ge4$.
Since $L(\s\cup G)\ge3$, $\deg_{\s\cap G}(w)\ge2$ for every $w\in V(\s\cap G)$ by hypothesis, and we conclude $\deg_\s(w)\ge3$ for every $w\in V(\s)\setminus\{v\}$.
If $v\notin \s$, let $w$ be any vertex in $\s$ and let $P(w)$ be a path with length $3$ contained in $\s$ and such that $w$ is an endpoint of $P(w)$. By hypothesis $\deg_{P(w)}(w)\ge2$; since $w$ has a neighbor $w'\in V(\s\setminus P(w))$, $\deg_{\s}(w)\ge3$ for any $w\in V(\s)$.
Then, Theorem \ref{th:delt>=5/4} gives the result.
\end{proof}

Note that if a graph $G$ verifies $\diam G\le2$ then every path $\eta$ joining two vertices of $G$ with $L(\eta) = 3$ satisfies $\deg_\eta(w)\ge2$ for every vertex $w \in V(\eta)$.  The converse does not hold, since in the disjoint union $C_3\cup C_3$ of two cycles $C_3$ any path with length $3$ is a cycle and $\diam C_3\cup C_3 = \infty$. However, these two conditions are equivalent if $G$ is connected.

If $G$ is a graph with connected components $\{G_j\}$, we define
$$
\diam^* G :=\sup_{j} \, \diam G_j.
$$
Note that $\diam^* G = \diam G$ if $G$ is connected; otherwise, $\diam G=\infty$.
Also, $\diam^* G$ $>1$ is equivalent to $\D_{G}\ge2$.
We also have the following result:

\begin{lemma}\label{lemaX}
Let $G$ be any graph. Then $\diam^* G\le2$ if and only if every $\eta$ joining two vertices of $G$ with $L(\eta)=3$ satisfy $\deg_\eta(w)\ge2$ for every $w\in V(G)$.
\end{lemma}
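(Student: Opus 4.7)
The plan is to prove the biconditional in two steps, exploiting midpoints of edges to bridge the combinatorial degree condition with the metric condition on $\diam^{*}G$.

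For the forward direction, assume $\diam^{*}G\le 2$ and let $\eta=w_{0}w_{1}w_{2}w_{3}$ be a path of length $3$. The interior vertices $w_{1},w_{2}$ automatically satisfy $\deg_{\eta}\ge 2$. For the endpoint $w_{0}$, I consider the midpoint $m$ of the edge $[w_{2},w_{3}]$: since $w_{0}$ and $m$ lie in a common connected component, $d_{G}(w_{0},m)\le\diam^{*}G\le 2$; but $d_{G}(w_{0},m)=\tfrac12+\min\{d_{G}(w_{0},w_{2}),d_{G}(w_{0},w_{3})\}$ and this minimum is an integer $\le 3/2$, hence $\le 1$, so $w_{0}$ is adjacent in $G$ to $w_{2}$ or to $w_{3}$. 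Thus $\deg_{\eta}(w_{0})\ge 2$, and the argument for $w_{3}$ is symmetric.

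For the reverse direction I argue by contrapositive: assume $\diam^{*}G>2$ and pick points $p,q$ in one connected component with $d_{G}(p,q)>2$, splitting into three cases. \textup{(i)} If $p,q\in V(G)$, then $d_{G}(p,q)\ge 3$, and the first four vertices $v_{0}v_{1}v_{2}v_{3}$ of a geodesic realizing this distance form a length-$3$ path $\eta$ with $\deg_{\eta}(v_{0})=1$ by geodesicity. \textup{(ii)} If $p\in V(G)$ and $q$ is the midpoint of an edge $[a,b]$, the identity $d_{G}(p,q)=\tfrac12+\min\{d_{G}(p,a),d_{G}(p,b)\}$ forces both $d_{G}(p,a),d_{G}(p,b)\ge 2$; either reducing to (i) when one of them is $\ge 3$, or else taking a $2$-geodesic $p\,w\,a$ (with $w\ne b$ since $d_{G}(p,b)=2$) to obtain the path $\eta=p\,w\,a\,b$ in which $p$ is adjacent only to $w$ among the other three vertices, giving $\deg_{\eta}(p)=1$. \textup{(iii)} If $p,q$ are midpoints of disjoint edges $[a,b],[c,d]$, then all four cross-distances between $\{a,b\}$ and $\{c,d\}$ are $\ge 2$; after reducing to (i) when one of them is $\ge 3$, without loss of generality $d_{G}(a,c)=2$ and a $2$-geodesic $a\,v\,c$ yields the path $\eta=b\,a\,v\,c$ (with $b\ne v$ since otherwise $d_{G}(b,c)=1$, and $b\ne c$ by disjointness) whose endpoint $c$ satisfies $\deg_{\eta}(c)=1$, since $c$ is not adjacent to $a$ (distance $2$) or to $b$ (cross-distance $\ge 2$).

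The main delicacy is the case analysis of the reverse direction, specifically verifying in cases (ii) and (iii) that the four listed vertices of $\eta$ are truly distinct and that the claimed endpoint has no second neighbor in $V(\eta)$; each of these checks follows directly from the distance hypothesis on the pair $(p,q)$, but this bookkeeping is the only nontrivial step of the argument.
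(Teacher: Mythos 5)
The paper states Lemma \ref{lemaX} without any proof, so there is no argument of the authors to compare yours against; I can only assess the proposal on its own terms. Your forward direction is correct, and each of the three cases you treat in the reverse direction is internally sound: the distinctness checks and the verification that the designated endpoint has no second neighbour in $V(\eta)$ all go through.

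The genuine gap is at the very start of the reverse direction. The hypothesis $\diam^* G>2$ concerns the \emph{metric} diameter of a component, i.e.\ arbitrary points of the metric graph, including interior points of edges that are not midpoints. Your case split (vertex--vertex, vertex--midpoint, midpoint--midpoint) therefore does not exhaust the possibilities: you implicitly assume that a pair of points at distance greater than $2$ can be taken in $J(G)$, and that reduction is where the real work hides. It is true, but it needs an argument; for instance, with $p\in[a,b]$ at distance $t$ from $a$ and $q\in[c,d]$, one can have $d_{G}(p,q)>2$ while the two midpoints are at distance exactly $2$ (take $a$ adjacent to both $c$ and $d$, $b$ adjacent to neither, $d_{G}(b,c)=d_{G}(b,d)=2$, and $t$ close to $1$): the witness in $J(G)$ is then the pair $\big(b,m_{[c,d]}\big)$, not the pair of midpoints, and locating it requires inspecting the four cross-distances. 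Note also that your assertion in case (iii) that ``all four cross-distances are $\ge 2$'' is valid only because $p,q$ are exact midpoints, which underscores that the reduction to $J(G)$ is load-bearing. A clean repair: first dispose of $\diam V(G_j)\ge 3$ via your case (i); then observe that if some vertex $v$ has $d_{G}(v,x)\ge2$ and $d_{G}(v,y)\ge2$ for an edge $[x,y]$ of its component, your case (ii) construction already yields a bad path; finally, when no such $v$ exists (so every vertex is adjacent to an endpoint of every edge of its component) and $\diam V(G_j)\le 2$, show directly that $d_{G}(p,q)\le2$ for \emph{all} points $p\in[a,b]$, $q\in[c,d]$, by choosing $x,y$ with $\{x,y\}=\{a,b\}$, $x$ adjacent to $c$ and $y$ adjacent to $d$, and noting that the two resulting $p$--$q$ paths through $x,c$ and through $y,d$ have lengths summing to $4$. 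With that supplement your argument closes.
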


\begin{lemma}\label{l:Fact2Vert}
Let $G_1$ and $G_2$ be two graphs with at least two vertices. Then, $\d(G_1\uplus G_2)=1$ if and only if $\diam G_i\le2$ or $G_i$ is an empty graph for $i=1,2$.
\end{lemma}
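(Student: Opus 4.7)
Since each $V(G_i)$ has at least two vertices, choosing two vertices from each factor produces a $K_{2,2}=C_4$ inside $G_1\uplus G_2$; hence $c(G_1\uplus G_2)\ge 4$, which by Theorem \ref{th:delt<1}(c) and Proposition \ref{r:discretJoin} forces $\d(G_1\uplus G_2)\ge 1$, while Corollary \ref{cor:SP} gives $\d(G_1\uplus G_2)\le 3/2$. So the lemma reduces to showing that $\d(G_1\uplus G_2)\le 1$ if and only if each $G_i$ is empty or has $\diam G_i\le 2$, and by Theorem \ref{th:delt>=5/4} this is equivalent to: no cycle $\s$ in $G_1\uplus G_2$ with $L(\s)\ge 5$ admits a vertex $w\in V(\s)$ with $\deg_\s(w)=2$.

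For necessity I would argue contrapositively: assuming, without loss of generality, that $G_1$ is non-empty with $\diam G_1>2$, I would build a $5$-cycle with a degree-$2$ vertex. If $G_1$ is connected with $\diam G_1\ge 3$, take a geodesic $a-b-c-d$ in $G_1$ (necessarily induced, since it is shortest) and any $v\in V(G_2)$; the cycle $\s=a-b-c-d-v-a$ then has $\deg_\s(a)=2$, because $a$'s only neighbors among $\{b,c,d,v\}$ in the join are $b$ (a $G_1$-edge) and $v$ (a cross-edge). If instead $G_1$ is disconnected and still contains an edge $[a,b]$, pick $c\in V(G_1)$ in a different component of $G_1$ together with two distinct $v_1,v_2\in V(G_2)$; then $\s=a-b-v_1-c-v_2-a$ is a $5$-cycle with $\deg_\s(c)=2$, since $c$ has no $G_1$-neighbor in $\{a,b\}$. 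In both subcases Theorem \ref{th:delt>=5/4} gives $\d(G_1\uplus G_2)\ge 5/4$, contradicting $\d(G_1\uplus G_2)=1$.

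For sufficiency, assume that each $G_i$ is empty or satisfies $\diam G_i\le 2$, and let $\s$ be a cycle of length at least $5$ in $G_1\uplus G_2$; write $V_i=V(\s)\cap V(G_i)$. I would check $\deg_\s(w)\ge 3$ for every $w\in V(\s)$. If $V_j=\emptyset$ for some $j$, then $\s\subseteq G_i$, so $G_i$ is non-empty and therefore $\diam G_i\le 2$; Lemma \ref{lemaX} converts this hypothesis into the chord condition on length-$3$ paths required by Lemma \ref{l:EmptyJoin}, and the same cycle analysis used there gives $\deg_\s(w)\ge 3$ for every $w\in V(\s)$. If instead both $V_1$ and $V_2$ are non-empty, every $w\in V_i$ is joined to all of $V_j$ by cross-edges in the induced subgraph on $V(\s)$, so $\deg_\s(w)\ge |V_j|$; in particular $\deg_\s(w)=2$ would force $|V_j|\le 2$, and a case analysis on $(|V_1|,|V_2|)$ with $|V_1|+|V_2|\ge 5$ then uses $\diam G_i\le 2$ (or emptiness) on the opposite factor to produce a chord inside $V_i$ that raises $\deg_\s(w)$ to at least $3$.

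The main obstacle is this last mixed-cycle case: when only one or two vertices of the cycle lie in $V(G_j)$, the cycle's $V_i$-portion is a long walk in $G_i$, and to extract a chord one has to apply the diameter bound between vertices of $V_i$ that are far apart along the cycle but still at $G_i$-distance at most $2$. That translation is exactly the content of Lemma \ref{lemaX}, and once it is in place the remaining bookkeeping — combining it with the within-factor case and the necessity construction above — is routine.
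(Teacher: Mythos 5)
Your necessity argument has a concrete gap: you treat ``$\diam G_1>2$'' as meaning that $G_1$ is either disconnected or contains two \emph{vertices} at distance $3$, but in this paper $\diam G_i$ is the diameter of $G_i$ as a metric space (interior points of edges included), and there are connected graphs with $\diam V(G)=2$ yet $\diam G>2$ --- for instance $C_5$ (diameter $5/2$) or the Petersen graph (diameter $3$). For such a $G_1$ neither of your subcases applies, since no geodesic $a-b-c-d$ between vertices exists; yet the conclusion is genuinely needed there, e.g.\ $\d(C_5\uplus G_2)\ge 5/4$ for any $G_2$ with two vertices. The correct construction (the one the paper uses) is: $\diam G_1>2$ yields a vertex $v$ and an edge $[w_1,w_2]$ with $d_{G_1}(v,w_1),d_{G_1}(v,w_2)\ge 2$ (equivalently, $v$ is at distance $\ge 5/2$ from the midpoint of that edge); then $\s=v-u_1-w_1-w_2-u_2-v$ with $u_1,u_2\in V(G_2)$ is a $5$-cycle with $\deg_\s(v)=2$, and Theorem \ref{th:delt>=5/4} applies. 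This vertex-versus-edge configuration is exactly why the hypothesis is stated with $\diam G_i$ rather than $\diam V(G_i)$, and your proof as written only establishes the (false) vertex-diameter version.

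The same issue resurfaces in the part of sufficiency you leave as ``routine bookkeeping'': the dangerous configuration is a $5$-cycle $u_1-w-u_2-p-q-u_1$ with $w,p,q\in V(G_1)$ and $u_1,u_2\in V(G_2)$, where $\deg_\s(w)=2$ unless $w$ is adjacent to $p$ or $q$; what rescues you is precisely that $\diam G_1\le 2$ (metric diameter) forbids a vertex at $G_1$-distance $\ge 2$ from both endpoints of an edge, which $\diam V(G_1)\le 2$ alone would not. This can be made to work, but note that the paper sidesteps the whole mixed-cycle analysis when both factors are non-empty: by Theorem \ref{t:TrianVMp} it suffices to consider geodesic triangles that are cycles with corners in $J(G_1\uplus G_2)$, and Proposition \ref{prop:JoinDist} together with $\diam G_1,\diam G_2\le 2$ forces every side of such a triangle to have length at most $2$, giving $\d(T)\le 1$ directly. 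Adopting that argument removes the case analysis you flag as the main obstacle.
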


\begin{proof}
Assume that $\d(G_1\uplus G_2)=1$. Seeking for a contradiction, assume that $\diam G_1$ $\ge5/2$ and $G_1$ is a non-empty graph or $\diam G_2 \ge 5/2$ and $G_2$ is a non-empty graph. By symmetry, without loss of generality we can assume that $\diam G_1\ge5/2$ and $G_1$ is a non-empty graph; hence, there are a vertex $v\in V(G_1)$ and a midpoint $p\in [w_1,w_2]$ with $d_{G_1}(v,p)\ge5/2$. Consider a cycle $\s$ in $G_1\uplus G_2$ containing the vertex $v$, the edge $[w_1,w_2]$ and two vertices of $G_2$, with $L(\s)=5$. We have $\deg_{\s}(v)=2$. Thus, Theorem \ref{th:delt>=5/4} gives $\d(G_1\uplus G_2)>1$. This contradicts our assumption, and so, we obtain $\diam G_1\le2$.

\medskip

Assume now that $\diam G_i\le2$ or $G_i$ is an empty graph for $i=1,2$.
Since $G_1$ and $G_2$ have at least two vertices, there exists a cycle isomorphic to $C_4$ in $G_1\uplus G_2$.

First of all, if $G_1$ and $G_2$ are empty graphs then Example \ref{examples} gives $\d(G_1\uplus G_2)=1$.

Without loss of generality we can assume that $G_1$ is a non-empty graph, then $G_1$ satisfies $\diam G_1\le2$.

Assume that $G_2$ is an empty graph. Let $\s$ be any cycle in $G_1\uplus G_2$ with $L(\s)\ge5$. Since $\s$ contains at least three vertices in $G_1$, we have $\deg_{\s}(v)=|V(G_1)\cap\s|\ge3$ for every $v\in V(G_2)\cap\s$.
Besides, if $|V(G_2)\cap\s|\ge3$ then $\deg_{\s}(w)\ge|V(G_2)\cap\s|\ge3$ for every $w\in V(G_1)\cap\s$. If $|V(G_2)\cap\s|=1$, then $\eta:=\s\cap G_1$ is a path in $G_1$ with $L(\eta)\ge3$, and so, $\deg_\eta(w)\ge2$ and $\deg_\s(w)\ge3$ for every $w \in V(\eta)$.
If $|V(G_2)\cap\s|=2$, then $\s\cap G_1$ is the union of two paths and $|V(G_1)\cap\s|\ge3$; since $\diam G_1\le2$, we have $\deg_{G_1\cap\s}(w)\ge1$ for every $w\in V(G_1)\cap\s$ (otherwise there are a vertex $w\in V(G_1)\cap\s$ and a midpoint $p\in G_1\cap \s$ with $d_{G_1}(w,p)>2$).
Then, we have $\deg_{\s}(v)\ge3$ for every $v\in V(\s)$ and so, we obtain $\d(G_1\uplus G_2)=1$ by Theorem \ref{th:delt=1}.

\smallskip

Finally, assume that $\diam G_2\le2$.
By Theorem \ref{t:TrianVMp} it suffices to consider geodesic triangles $T=\{x,y,z\}$ in $G_1\uplus G_2$ that are cycles with $x,y,z\in J(G_1\uplus G_2)$. So, since $\diam G_1,\diam G_2\le2$, Proposition \ref{prop:JoinDist} gives that $L([xy]),L([yz]),L([zx]) \le 2$; thus, for every $\a\in[xy]$, $d_{G_1\uplus G_2}(\a,[yz]\cup[zx])\le d_{G_1\uplus G_2}(\a,\{x,y\})\le L([xy])/2$. Hence, $\d(T)\le \max\{L([xy]),L([yz]),L([zx])\}/2\le1$ and so, $\d(G_1\uplus G_2)\le1$. Since $G_1$ and $G_2$ have at least two vertices, by Theorem \ref{th:delt<1} we have $\d(G_1\uplus G_2)\ge1$ and we conclude $\d(G_1\uplus G_2)=1$.
\end{proof}

The following result characterizes the joins of graphs with hyperbolicity constant one.

\begin{theorem}\label{th:hypJoin1}
Let $G_1,G_2$ be any two graphs. Then the following statements hold:
\begin{itemize}
  \item {Assume that $G_1\simeq E_1$. Then $\d(G_1\uplus G_2)=1$ if and only if $1<\diam^* G_2\le2$.}
  \item {Assume that $G_1$ and $G_2$ have at least two vertices. Then $\d(G_1\uplus G_2)=1$ if and only if $\diam G_i\le2$ or $G_i$ is an empty graph for $i=1,2$.}
\end{itemize}
\end{theorem}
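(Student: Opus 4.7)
The plan is to dispose of the two items separately. The second item coincides with Lemma~\ref{l:Fact2Vert}, already proven in the paper, so there is nothing new to do there.

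For the first item, assume $G_1\simeq E_1$. My plan is to assemble the equivalence $\d(E_1\uplus G_2)=1 \Longleftrightarrow 1<\diam^* G_2\le 2$ from the bounds and gap results already available. By Theorem~\ref{t:multk/4} and Theorem~\ref{th:delt<1}(b), the only values of the hyperbolicity constant that are at most $1$ are $0, 3/4, 1$. Hence
\[
\d(E_1\uplus G_2)=1 \iff \d(E_1\uplus G_2)\le 1 \ \text{and}\ \d(E_1\uplus G_2)\notin\{0,3/4\}.
\]
I would then rewrite each of these two conditions separately. Combining Lemma~\ref{l:EmptyJoin} with Lemma~\ref{lemaX} expresses $\d(E_1\uplus G_2)\le 1$ as $\diam^* G_2\le 2$. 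For the other condition, Theorem~\ref{th:deltJoin<1}(1) characterizes $\d(E_1\uplus G_2)=0$ as $G_2$ being an empty graph (i.e.\ $\D_{G_2}=0$), and Theorem~\ref{th:deltJoin<1}(2) characterizes $\d(E_1\uplus G_2)=3/4$ as $\D_{G_2}=1$ (the alternative disjunct in that theorem would require $\D_{E_1}=1$, which is impossible). Hence $\d(E_1\uplus G_2)\notin\{0,3/4\}$ is equivalent to $\D_{G_2}\ge 2$.

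To finish, I invoke the equivalence $\D_G\ge 2 \Longleftrightarrow \diam^* G>1$ noted in the excerpt just before Lemma~\ref{lemaX}, which converts $\D_{G_2}\ge 2$ into $\diam^* G_2>1$; combining with $\diam^* G_2\le 2$ gives the stated range $1<\diam^* G_2\le 2$. The whole argument is a purely formal assembly of earlier results, so no step poses a real obstacle; the only care needed is to ensure the characterizations from Theorem~\ref{th:deltJoin<1} and from Lemmas~\ref{l:EmptyJoin}--\ref{lemaX} are applied with $G_1\simeq E_1$ so that the symmetric disjuncts collapse correctly.
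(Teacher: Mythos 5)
Your proposal is correct and follows essentially the same route as the paper, which proves the first item precisely by combining Theorem~\ref{th:deltJoin<1} with Lemmas~\ref{l:EmptyJoin} and~\ref{lemaX} (the quantization of $\d$ to $\{0,3/4,1,5/4,3/2\}$ that you invoke via Theorems~\ref{t:multk/4} and~\ref{th:delt<1} is recorded in the paper as Proposition~\ref{r:discretJoin}), and disposes of the second item as exactly Lemma~\ref{l:Fact2Vert}. Your write-up simply makes explicit the bookkeeping the paper leaves to the reader.
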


\begin{proof}
We have the first statement by Theorem \ref{th:deltJoin<1} and Lemmas \ref{l:EmptyJoin} and \ref{lemaX}.
The second statement is just Lemma \ref{l:Fact2Vert}.
\end{proof}

In order to compute the hyperbolicity constant of any graph join we are going to characterize the joins of graphs with hyperbolicity constant $3/2$.

\begin{lemma}\label{l:hyp3/2Fact}
Let $G_1,G_2$ be any two graphs. If $\d(G_1\uplus G_2)=3/2$, then each geodesic triangle $T=\{x,y,z\}$ in $G_1\uplus G_2$ that is a cycle with $x,y,z \in J(G_1\uplus G_2)$ and $\d(T)=3/2$ is contained in either $G_1$ or $G_2$.
\end{lemma}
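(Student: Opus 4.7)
My plan is to use distance constraints around a point attaining $\d(T)=3/2$ to force the entire triangle into a single factor. By definition, there exists a point $\alpha_0\in T$ with $d(\alpha_0,\sigma')\ge 3/2$ for each side $\sigma'$ of $T$ not containing it; relabelling if needed, assume $\alpha_0\in [xy]$. Then $d(\alpha_0,x), d(\alpha_0,y)\ge 3/2$, so $L([xy])\ge 3$, and Corollary~\ref{c:diam} forces $L([xy])=3$ with $\alpha_0$ the midpoint of $[xy]$; in particular $d(\alpha_0,z)\ge 3/2$. Corollary~\ref{cor:midpoint} then gives that $x$ and $y$ are midpoints of two edges, say $[u_1,u_2]$ and $[v_1,v_2]$, contained in the same factor, which we may assume to be $G_1$.

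Proposition~\ref{prop:JoinDist}(a) says that either $[xy]\subset G_1$, or else $[xy]=x-u_i-v-v_j-y$ for some $v\in V(G_2)$, in which case $\alpha_0=v$. In the latter case, since $x$ is the midpoint of the edge $[u_1,u_2]\in E(G_1)$, the geodesic $[xz]$ must leave $x$ along this edge and so contain a vertex $u\in V(G_1)$; by Remark~\ref{r:K_nm}, $d(v,u)=1<3/2$, giving $d(\alpha_0,[xz])\le 1$, which contradicts $\d(T)=3/2$. Therefore $[xy]\subset G_1$ and $\alpha_0=w\in V(G_1)$ is the middle vertex of $[xy]$. The same idea, using $d(w,v^*)=1<3/2$ for every $v^*\in V(G_2)$, immediately rules out $z\in V(G_2)$ and rules out $z$ being the midpoint of an edge $[v',v'']\in E(G_2)$: by Proposition~\ref{prop:JoinDist}(b) the geodesic $[xz]$ has length $2$ and shape $x-u_k-v^{**}-z$ with $v^{**}\in\{v',v''\}\subset V(G_2)$, placing $v^{**}$ on $[xz]$ at distance $1$ from $w$.

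The main obstacle is the case where $z$ is the midpoint of an edge $[u^*,v^*]\in K(G_1,G_2)$ with $u^*\in V(G_1)$ and $v^*\in V(G_2)$, since here one may have $d(w,z)=3/2$ and a direct distance contradiction is unavailable. I would resolve this by invoking the cycle hypothesis: the geodesic $[xz]$ approaches $z$ via either the $u^*$-end or the $v^*$-end of $[u^*,v^*]$, but approaching via $v^*$ places $v^*\in V(G_2)$ on $[xz]$, contradiction; so $u^*$ lies on $[xz]$, and by the same argument $u^*$ lies on $[yz]$. Since $u^*\in V(G_1)$ is distinct from $x,y,z$ (all midpoints), $u^*$ is an interior point shared by $[xz]$ and $[yz]$, contradicting the hypothesis that $T$ is a simple closed curve. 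Hence $z\in G_1$; a last application of the same principle, namely that any part of $[xz]$ or $[yz]$ lying outside $G_1$ would require traversing a complete edge of $K(G_1,G_2)$ (since geodesics do not backtrack) and therefore would contain a $V(G_2)$-vertex at distance $1$ from $w$, shows that $[xz],[yz]\subset G_1$, so $T\subset G_1$.
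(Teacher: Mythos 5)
Your proof is correct and follows essentially the same route as the paper's: both locate the point realizing $\d(T)=3/2$ at the middle vertex $w$ of $[xy]$ (with $x,y$ midpoints of edges of one factor) and derive a contradiction from the fact that $w$ (or the vertex of $V(G_1)$ adjacent to $x$ on the triangle) lies at distance $1$ from any vertex of the other factor on $[yz]\cup[zx]$. Your treatment is merely more explicit than the paper's condensed contradiction argument --- in particular the case where $z$ is the midpoint of an edge of $K(G_1,G_2)$, which you rule out via the cycle hypothesis, is handled only implicitly there.
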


\begin{proof}
Seeking for a contradiction assume that there is a geodesic triangle $T=\{x,y,z\}$ in $G_1\uplus G_2$ that is a cycle with $x,y,z \in J(G_1\uplus G_2)$ and $\d(T)=3/2$ which contains vertices in both factors $G_1,G_2$.
Without loss of generality we can assume that there is $p\in[xy]$ with $d_{G_1\uplus G_2}(p, [yz]\cup[zx]) = 3/2$, and so, $L([xy])\ge3$. Hence, $d_{G_1\uplus G_2}(x,y)=3$ by Corollary \ref{c:diam}, and by Corollary \ref{cor:midpoint} we have that $x,y$ are midpoints either in $G_1$ or in $G_2$, and so, $p$ is a vertex in $G_1\uplus G_2$. Without loss of generality we can assume that $x,y\in G_1$.
Let $V_x$ be the closest vertex to $x$ in $[xz]\cup[zy]$.
If $p\in V(G_2)$ then $d_{G_1\uplus G_2}(p,[yz]\cup[zx]) \le d_{G_1\uplus G_2}(p,V_x) =1$. This contradicts our assumption.
If $p\in V(G_1)$ then since $T$ contains vertices in both factors, we have $d_{G_1\uplus G_2}(p,[yz]\cup[zx]) \le d_{G_1\uplus G_2}\big((p,V(G_2)\cap\{[yz]\cup[zx]\}\big) =1$. This also contradicts our assumption, and so, we have the result.
\end{proof}

\begin{corollary}\label{c:3/2}
Let $G_1,G_2$ be any two graphs. If $\d(G_1\uplus G_2)=3/2$, then $$\max\{\d(G_1),\d(G_2)\}\ge3/2.$$
\end{corollary}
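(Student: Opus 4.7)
The plan is to invoke Lemma \ref{l:hyp3/2Fact} and then transfer the extremal triangle back to one of the factor graphs. Since $\d(G_1 \uplus G_2) = 3/2 < \infty$, Theorem \ref{t:TrianVMp} supplies a geodesic triangle $T = \{x,y,z\}$ in $G_1 \uplus G_2$ which is a cycle with $x,y,z \in J(G_1 \uplus G_2)$ and $\d(T) = \d(G_1 \uplus G_2) = 3/2$. Lemma \ref{l:hyp3/2Fact} then places $T$ entirely inside one of the factors; after relabeling, we may assume $T \subseteq G_1$.

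Next, I would upgrade $T$ from a geodesic triangle of $G_1 \uplus G_2$ to one of $G_1$. Fix any side $[xy]$ of $T$; it is a path of $G_1$ whose length in $G_1 \uplus G_2$ equals $d_{G_1 \uplus G_2}(x,y)$. Since every path in $G_1$ from $x$ to $y$ is also a path in $G_1 \uplus G_2$, one has $d_{G_1 \uplus G_2}(x,y) \le d_{G_1}(x,y)$; conversely, the length of $[xy]$ is at least $d_{G_1}(x,y)$ because $[xy] \subseteq G_1$. Combining, $d_{G_1}(x,y) = L([xy]) = d_{G_1 \uplus G_2}(x,y)$, so $[xy]$ is a geodesic of $G_1$, and $T$ is a geodesic triangle in $G_1$.

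For the thinness, every $p, q \in G_1$ satisfy $d_{G_1 \uplus G_2}(p,q) \le d_{G_1}(p,q)$, and hence for every $p$ on a side $J$ of $T$,
$$d_{G_1}(p, T \setminus J) \ge d_{G_1 \uplus G_2}(p, T \setminus J).$$
Taking suprema yields $\d_{G_1}(T) \ge \d_{G_1 \uplus G_2}(T) = 3/2$, so $\d(G_1) \ge \d_{G_1}(T) \ge 3/2$, and the claimed bound $\max\{\d(G_1),\d(G_2)\} \ge 3/2$ follows.

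The main obstacle to flag is that $G_1$ is not, in general, an isometric subgraph of $G_1 \uplus G_2$: a path may shortcut through $V(G_2)$ and shave off up to $2$ units in length (cf.\ Proposition \ref{prop:JoinDist}(a)), so Lemma \ref{l:subgraph} is not directly available. The trick is that Lemma \ref{l:hyp3/2Fact} confines the extremal triangle $T$ to a single factor, forcing the geodesics realising its sides to avoid those shortcuts; the ambient and factor distances then coincide on the vertices and midpoints of $T$, and the one-sided bound $d_{G_1 \uplus G_2} \le d_{G_1}$ suffices for the remaining thinness estimate.
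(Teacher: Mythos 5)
Your proof is correct and follows the same route the paper intends: Corollary \ref{c:3/2} is stated as an immediate consequence of Lemma \ref{l:hyp3/2Fact}, and you supply exactly the missing details (the extremal triangle from Theorem \ref{t:TrianVMp} lies in one factor, its sides remain geodesics there since $d_{G_i}\ge d_{G_1\uplus G_2}$ forces equality along paths contained in $G_i$, and the thin constant can only increase when measured in the factor). Your closing remark correctly identifies why Lemma \ref{l:subgraph} cannot be applied directly and why the one-sided distance comparison suffices.
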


\smallskip

The following families of graphs allow to characterize the joins of graphs with hyperbolicity constant $3/2$.
Denote by $C_n$ the cycle graph with $n\ge3$ vertices and by $V(C_n):=\{v_1^{(n)},\ldots,v_n^{(n)}\}$ the set of their vertices such that $[v_n^{(n)},v_1^{(n)}]\in E(C_n)$ and $[v_i^{(n)},v_{i+1}^{(n)}]\in E(C_n)$ for $1\le i\le n-1$.
Let us consider $\mathcal{C}_6^{(1)}$ the set of graphs obtained from $C_6$ by addying a (proper or not) subset of the set of edges $\{[v_2^{(6)},v_6^{(6)}]$, $[v_4^{(6)},v_6^{(6)}]\}$.
Let us define the set of graphs
$$\mathcal{F}_6:=\{ G \ \text{\small containing, as induced subgraph, an isomorphic graph to some element of } \mathcal{C}_6^{(1)}\}.$$
Let us consider $\mathcal{C}_7^{(1)}$ the set of graphs obtained from $C_7$ by addying a (proper or not) subset of the set of edges $\{[v_2^{(7)},v_6^{(7)}]$, $[v_2^{(7)},v_7^{(7)}]$, $[v_4^{(7)},v_6^{(7)}]$, $[v_4^{(7)},v_7^{(7)}]\}$.
Define
$$\mathcal{F}_7:=\{ G \ \text{\small containing, as induced subgraph, an isomorphic graph to some element of } \mathcal{C}_7^{(1)}\}.$$
Let us consider $\mathcal{C}_8^{(1)}$ the set of graphs obtained from $C_8$ by addying a (proper or not) subset of the set $\{[v_2^{(8)},v_6^{(8)}]$, $[v_2^{(8)},v_8^{(8)}]$, $[v_4^{(8)},v_6^{(8)}]$, $[v_4^{(8)},v_8^{(8)}]\}$.
Also, consider $\mathcal{C}_8^{(2)}$ the set of graphs obtained from $C_8$ by addying a (proper or not) subset of $\{[v_2^{(8)},v_8^{(8)}]$, $[v_4^{(8)},v_6^{(8)}]$, $[v_4^{(8)},v_7^{(8)}]$, $[v_4^{(8)},v_8^{(8)}]\}$.
Define
$$\mathcal{F}_8:=\{ G \ \text{\small containing, as induced subgraph, an isomorphic graph to some element of } \mathcal{C}_8^{(1)}\cup \mathcal{C}_8^{(2)}\}.$$
Let us consider $\mathcal{C}_9^{(1)}$ the set of graphs obtained from $C_9$ by addying a (proper or not) subset of the set of edges $\{[v_2^{(9)},v_6^{(9)}]$, $[v_2^{(9)},v_9^{(9)}]$, $[v_4^{(9)},v_6^{(9)}]$, $[v_4^{(9)},v_9^{(9)}]\}$.
Define
$$\mathcal{F}_9:=\{ G \ \text{\small containing, as induced subgraph, an isomorphic graph to some element of } \mathcal{C}_9^{(1)}\}.$$
Finally, we define the set $\mathcal{F}$ by
$$\mathcal{F}:=\mathcal{F}_6\cup\mathcal{F}_7\cup\mathcal{F}_8\cup\mathcal{F}_9.$$
Note that $\mathcal{F}_6$, $\mathcal{F}_7$, $\mathcal{F}_8$ and $\mathcal{F}_9$ are not disjoint sets of graphs.

\medskip

The following theorem characterizes the joins of graphs $G_1$ and $G_2$ with $\d(G_1\uplus G_2)=3/2$.
For any non-empty set $S\subset V(G)$, the induced subgraph of $S$ will be denoted by $\langle S\rangle$.

\begin{theorem}\label{th:hyp3/2}
Let $G_1,G_2$ be any two graphs. Then, $\d(G_1\uplus G_2)=3/2$ if and only if $G_1\in \mathcal{F}$ or $G_2\in\mathcal{F}$.
\end{theorem}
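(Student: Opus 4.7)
The upper bound $\d(G_1\uplus G_2)\le 3/2$ is already given by Corollary \ref{cor:SP}, so the task is to characterize when equality holds. For brevity, set $G:=G_1\uplus G_2$.

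For the implication $(\Leftarrow)$, assume without loss of generality that $G_1$ contains, as induced subgraph, a copy $H$ of some $G^*\in\mathcal{C}_n^{(k)}$ with $n\in\{6,7,8,9\}$. In each such $G^*$ one has a canonical candidate triangle: two midpoints $x,y$ of ``almost antipodal'' edges of the distinguished cycle of $G^*$ (chosen so that $d_H(x,y)=3$) together with a third point $z$ on the complementary arc (a vertex when $G^*\in\mathcal{C}_6^{(1)}\cup\mathcal{C}_7^{(1)}\cup\mathcal{C}_8^{(1)}\cup\mathcal{C}_9^{(1)}$, a midpoint when $G^*\in\mathcal{C}_8^{(2)}$), so that the sides $[xy],[yz],[zx]$ trace the whole underlying cycle. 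Using Proposition \ref{prop:JoinDist}(a) I would verify that (a) the listed chords of the cycle neither shorten any of the three sides nor reduce the distance from the midpoint vertex of $[xy]$ to the other two sides, and (b) the length-$2$ bipartite shortcuts through $V(G_2)$ only produce parallel geodesics of the same length. This shows $\d(T)=3/2$ inside $G$, so $\d(G)\ge 3/2$ and equality follows from Corollary \ref{cor:SP}.

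For the implication $(\Rightarrow)$, suppose $\d(G)=3/2$. By Theorem \ref{t:TrianVMp} and Lemma \ref{l:hyp3/2Fact} there is a geodesic triangle $T=\{x,y,z\}$ in one factor, say $G_1$, with $x,y,z\in J(G)$, which is a cycle $\s$ and satisfies $\d(T)=3/2$. A point $p$ on some side (say $[xy]$) attains $d_G(p,[yz]\cup[zx])=3/2$, so $d_G(p,x),d_G(p,y)\ge 3/2$ and thus $L([xy])\ge 3$; Corollary \ref{c:diam} then forces $L([xy])=3$, Corollary \ref{cor:midpoint} forces $x,y$ to be midpoints of edges in $G_1$ with $d_{G_1}(x,y)\ge 3$, and Proposition \ref{prop:JoinDist}(a) gives $L([yz]),L([zx])\le 3$. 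Since both arcs of $\s$ separating $x$ from $y$ must have length at least $d_G(x,y)=3$, we conclude $n:=L(\s)\in\{6,7,8,9\}$. For each $n$ I would enumerate the possible positions of $x,y,z$ on $\s$ (essentially unique up to the automorphisms of $C_n$, except that $n=8$ admits one subcase with $z$ a vertex and one with $z$ a midpoint), and then determine the chords of $\s$ compatible with (i) each of $[xy],[yz],[zx]$ remaining a geodesic in $G$, and (ii) the midpoint of $[xy]$ remaining at distance $3/2$ from $[yz]\cup[zx]$. A direct check shows the list of surviving chords matches exactly the chord sets of $\mathcal{C}_n^{(k)}$, so $\langle V(\s)\rangle$ in $G_1$ is isomorphic to an element of $\mathcal{C}_n^{(k)}$ and $G_1\in\mathcal{F}$.

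The main obstacle is the bookkeeping in the $(\Rightarrow)$ direction: every potential chord of $\s$ has to be tested, but because bipartite length-$2$ shortcuts through $V(G_2)$ are already available automatically by Proposition \ref{prop:JoinDist}(a), an added chord of $G_1$ destroys $\d(T)=3/2$ only when it strictly improves on the join shortcut. The chords that survive are precisely those joining the two neighbors in $\s$ of the midpoint vertex of $[xy]$ to vertices of the complementary arc that are not endpoints of the edges realizing $x$ or $y$; this is what produces the explicit edge sets appearing in the definitions of $\mathcal{C}_n^{(1)}$, and the split $\mathcal{C}_8^{(1)}\cup\mathcal{C}_8^{(2)}$ for $n=8$ reflects the two choices (vertex versus midpoint) for the third point $z$.
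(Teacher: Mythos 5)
Your proposal is correct and follows essentially the same route as the paper: the forward direction uses Theorem \ref{t:TrianVMp}, Lemma \ref{l:hyp3/2Fact}, Corollaries \ref{c:diam} and \ref{cor:midpoint} to pin down a triangle of length $6\le L(T)\le 9$ inside one factor and then enumerates the admissible chords, which is exactly the paper's case analysis yielding $\mathcal{F}_6,\dots,\mathcal{F}_9$; the converse is the same verification the paper leaves as a check. Your closing remark that a chord only matters when it beats the automatic length-$2$ shortcut through $V(G_2)$ is precisely the principle underlying the paper's lists of forbidden versus optional chords.
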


\begin{proof}
Assume first that $\d(G_1\uplus G_2)=3/2$.
By Theorem \ref{t:TrianVMp} there is a geodesic triangle $T=\{x,y,z\}$ in $G_1\uplus G_2$ that is a cycle with $x,y,z \in J(G_1)$ and $\d(T)=3/2$. By Lemma \ref{l:hyp3/2Fact}, $T$ is contained either in $G_1$ or in $G_2$.
Without loss of generality we can assume that $T$ is contained in $G_1$.
Without loss of generality we can assume that there is $p\in[xy]$ with $d_{G_1\uplus G_2}(p, [yz]\cup[zx]) = 3/2$, and by Corollary \ref{c:diam}, $L([xy])=3$.
Hence, by Corollary \ref{cor:midpoint} we have that $x,y$ are midpoints in $G_1$, and so, $p\in V(G_1)$.
Since $L([yz])\le3$, $L([zx])\le3$ and $L([yz])+L([zx])\ge L([xy])$, we have $6\le L(T)\le9$.

\smallskip

Assume that $L(T)=6$.
Denote by $\{v_1,\ldots,v_6\}$ the vertices in $T$ such that $T=\bigcup_{i=1}^{6}[v_i,v_{i+1}]$ with $v_7:=v_1$. Without loss of generality we can assume that $x\in[v_1,v_2]$, $y\in[v_4,v_5]$ and $p=v_3$. Since $d_{G_1\uplus G_2}(x,y)=3$, we have that $\langle\{v_1,\ldots,v_6\}\rangle$ contains neither $[v_1,v_4]$, $[v_1,v_5]$, $[v_2,v_4]$ nor $[v_2,v_5]$; besides, since $d_{G_1\uplus G_2}(p,[yz]\cup[zx])>1$ we have that $\langle\{v_1,\ldots,v_6\}\rangle$ contains neither $[v_3,v_1]$, $[v_3,v_5]$ nor $[v_3,v_6]$.
Note that $[v_2,v_6]$, $[v_4,v_6]$ may be contained in $\langle\{v_1,\ldots,v_6\}\rangle$. Therefore, $G_1\in \mathcal{F}_6$.

Assume that $L(T)=7$ and $G_1\notin \mathcal{F}_6$.
Denote by $\{v_1,\ldots,v_7\}$ the vertices in $T$ such that $T=\bigcup_{i=1}^{7}[v_i,v_{i+1}]$ with $v_8:=v_1$. Without loss of generality we can assume that $x\in[v_1,v_2]$, $y\in[v_4,v_5]$ and $p=v_3$. Since $d_{G_1\uplus G_2}(x,y)=3$, we have that $\langle\{v_1,\ldots,v_7\}\rangle$ contains neither $[v_1,v_4]$, $[v_1,v_5]$, $[v_2,v_4]$ nor $[v_2,v_5]$; besides, since $d_{G_1\uplus G_2}(p,[yz]\cup[zx])>1$ we have that $\langle\{v_1,\ldots,v_7\}\rangle$ contains neither $[v_3,v_1]$, $[v_3,v_5]$, $[v_3,v_6]$ nor $[v_3,v_7]$.
Since $G_1\notin \mathcal{F}_6$, $[v_1,v_6]$ and $[v_5,v_7]$ are not contained in $\langle\{v_1,\ldots,v_7\}\rangle$.
Note that $[v_2,v_6]$, $[v_2,v_7]$, $[v_4,v_6]$, $[v_4,v_7]$ may be contained in $\langle\{v_1,\ldots,v_7\}\rangle$. Hence, $G_1\in \mathcal{F}_7$.

Assume that $L(T)=8$ and $G_1\notin \mathcal{F}_6\cup\mathcal{F}_7$.
Denote by $\{v_1,\ldots,v_8\}$ the vertices in $T$ such that $T=\bigcup_{i=1}^{8}[v_i,v_{i+1}]$ with $v_9:=v_1$. Without loss of generality we can assume that $x\in[v_1,v_2]$, $y\in[v_4,v_5]$ and $p=v_3$. Since $d_{G_1\uplus G_2}(x,y)=3$, we have that $\langle\{v_1,\ldots,v_8\}\rangle$ contains neither $[v_1,v_4]$, $[v_1,v_5]$, $[v_2,v_4]$ nor $[v_2,v_5]$; besides, since $d_{G_1\uplus G_2}(p,[yz]\cup[zx])>1$ we have that $\langle\{v_1,\ldots,v_8\}\rangle$ contains neither $[v_3,v_1]$, $[v_3,v_5]$, $[v_3,v_6]$, $[v_3,v_7]$ nor $[v_3,v_8]$.
Since $G_1\notin \mathcal{F}_6\cup\mathcal{F}_7$, $[v_1,v_6]$, $[v_1,v_7]$, $[v_5,v_7]$, $[v_5,v_8]$ and $[v_6,v_8]$ are not contained in $\langle\{v_1,\ldots,v_8\}\rangle$.
Since $T$ is a geodesic triangle we have that $z\in\{v_{6,7},v_7,v_{7,8}\}$ with $v_{6,7}$ and $v_{7,8}$ the midpoints of $[v_6,v_7]$ and $[v_7,v_8]$, respectively.
If $z=v_7$ then $\langle\{v_1,\ldots,v_8\}\rangle$ contains neither $[v_2,v_7]$ nor $[v_4,v_7]$.
Note that $[v_2,v_6]$, $[v_2,v_8]$, $[v_4,v_6]$, $[v_4,v_8]$ may be contained in $\langle\{v_1,\ldots,v_8\}\rangle$.
If $z=v_{6,7}$ then $\langle\{v_1,\ldots,v_8\}\rangle$ contains neither $[v_2,v_6]$ nor $[v_2,v_7]$.
Note that $[v_2,v_8]$, $[v_4,v_6]$, $[v_4,v_7]$, $[v_4,v_8]$ may be contained in $\langle\{v_1,\ldots,v_8\}\rangle$.
By symmetry, we obtain an equivalent result for $z=v_{7,8}$. Therefore, $G_1\in \mathcal{F}_8$.

Assume that $L(T)=9$ and $G_1\notin \mathcal{F}_6\cup\mathcal{F}_7\cup\mathcal{F}_8$.
Denote by $\{v_1,\ldots,v_9\}$ the vertices in $T$ such that $T=\bigcup_{i=1}^{9}[v_i,v_{i+1}]$ with $v_{10}:=v_1$. Without loss of generality we can assume that $x\in[v_1,v_2]$, $y\in[v_4,v_5]$ and $p=v_3$. Since $d_{G_1\uplus G_2}(x,y)=3$, we have that $\langle\{v_1,\ldots,v_9\}\rangle$ contains neither $[v_1,v_4]$, $[v_1,v_5]$, $[v_2,v_4]$ nor $[v_2,v_5]$; besides, since $d_{G_1\uplus G_2}(p,[yz]\cup[zx])>1$ we have that $\langle\{v_1,\ldots,v_9\}\rangle$ contains neither $[v_3,v_1]$, $[v_3,v_5]$, $[v_3,v_6]$, $[v_3,v_7]$, $[v_3,v_8]$ nor $[v_3,v_9]$.
Since $T$ is a geodesic triangle we have that $z$ is the midpoint of $[v_7,v_8]$.
Since $d_{G_1\uplus G_2}(y,z)=d_{G_1\uplus G_2}(z,x)=3$, we have that $\langle\{v_1,\ldots,v_9\}\rangle$ contains neither $[v_1,v_7]$, $[v_1,v_8]$, $[v_2,v_7]$, $[v_2,v_8]$, $[v_4,v_7]$, $[v_4,v_8]$, $[v_5,v_7]$ nor $[v_5,v_8]$.
Since $G_1\notin \mathcal{F}_6\cup\mathcal{F}_7\cup\mathcal{F}_8$, $[v_1,v_6]$, $[v_5,v_9]$, $[v_6,v_8]$, $[v_6,v_9]$ and $[v_7,v_9]$ are not contained in $\langle\{v_1,\ldots,v_9\}\rangle$.
Note that $[v_2,v_6]$, $[v_2,v_9]$, $[v_4,v_6]$, $[v_4,v_9]$ may be contained in $\langle\{v_1,\ldots,v_9\}\rangle$.
Hence, $G_1\in \mathcal{F}_9$.

\smallskip

Finally, one can check that if $G_1\in \mathcal{F}$ or $G_2\in \mathcal{F}$, then $\d(G_1\uplus G_2)=3/2$, by following the previous arguments.
\end{proof}

These results allow to compute, in a simple way, the hyperbolicity constant of every graph join:

\begin{theorem}\label{th:hypJoin}
Let $G_1,G_2$ be any two graphs. Then,
\[
\d(G_1\uplus G_2)=\left\{
\begin{array}{ll}
0,&\text{if } G_i \simeq E_1 \text{ and } G_j \simeq E_n \text{ for } i\neq j \text{ and } n\in \NN,\\
3/4,&\text{if } G_i\simeq E_1 \text{ and } \D_{G_j}=1 \text{ for } i\neq j,\\
1,&\text{if } G_i\simeq E_1 \text{ and } 1< \diam^* G_j\le2 \text{ for } i\neq j;\text{ or}\\
& n_i\ge2 \text{ and }  \diam G_i\le2 \text{ or}\\
& G_i \text{ is an empty graph for } i=1,2;\\
3/2,&\text{if } G_1\in \mathcal{F} \text{ or } G_2\in\mathcal{F},\\
5/4,&\text{otherwise}.
\end{array}
\right.
\]
\end{theorem}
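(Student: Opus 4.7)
The plan is to deduce this formula as a direct consequence of the characterizations already established in this section, combined with the quantization result of Proposition \ref{r:discretJoin}. Since that proposition gives $\d(G_1\uplus G_2)\in\{0,3/4,1,5/4,3/2\}$, it suffices to decide, for each of these five values, exactly when it is attained; the first four characterizations then force the fifth value to cover all remaining pairs $(G_1,G_2)$.

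First I would match each row of the formula with the corresponding previous theorem. The row $\d(G_1\uplus G_2)=0$ is Theorem \ref{th:deltJoin<1}(1): $G_1,G_2$ are empty graphs with one of them isomorphic to $E_1$, written here as $G_i\simeq E_1$ and $G_j\simeq E_n$ for $i\neq j$. The row $\d(G_1\uplus G_2)=3/4$ is Theorem \ref{th:deltJoin<1}(2). The row $\d(G_1\uplus G_2)=1$ is the content of Theorem \ref{th:hypJoin1}; the two sub-conditions in that row are exactly the two bullets of that theorem (the first bullet is the case $G_i\simeq E_1$ and $1<\diam^* G_j\le 2$; the second is the case where both factors have at least two vertices and each one either has $\diam G_i\le 2$ or is an empty graph). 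The row $\d(G_1\uplus G_2)=3/2$ is Theorem \ref{th:hyp3/2}, which gives $G_1\in\mathcal{F}$ or $G_2\in\mathcal{F}$.

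For the final \emph{otherwise} row, if none of the conditions listed in the first four rows holds, then by the previous paragraph $\d(G_1\uplus G_2)\notin\{0,3/4,1,3/2\}$, and hence Proposition \ref{r:discretJoin} leaves only the value $5/4$. Thus the formula follows once one checks that the five conditions are pairwise disjoint, so that the \emph{otherwise} clause is well-defined and each pair $(G_1,G_2)$ sits in exactly one row.

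The main obstacle, modest as it is, is precisely this bookkeeping of exclusivity: one has to verify that, for instance, the $3/4$ row (which forces $G_i\simeq E_1$ and $\D_{G_j}=1$) is incompatible with the $1$-row sub-condition $1<\diam^* G_j\le 2$, and that no pair satisfying a $\d=1$ condition lies in $\mathcal{F}$, and so on. These are straightforward structural checks driven by the trichotomy ``some $G_i$ is $E_1$ / both factors are nontrivial / one factor belongs to $\mathcal{F}$''. Since every nontrivial geometric input has already been done in Theorems \ref{th:deltJoin<1}, \ref{th:hypJoin1} and \ref{th:hyp3/2}, the proof of Theorem \ref{th:hypJoin} reduces to this case-by-case normalization, which I would carry out in the same order in which the rows are listed.
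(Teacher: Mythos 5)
Your proposal is correct and is essentially the argument the paper intends: the theorem is presented as a direct assembly of Theorem \ref{th:deltJoin<1}, Theorem \ref{th:hypJoin1}, Theorem \ref{th:hyp3/2} and the quantization in Proposition \ref{r:discretJoin}, with $5/4$ as the residual case. The exclusivity bookkeeping you flag as the main obstacle is in fact automatic, since each row's condition is an \emph{if and only if} characterization of the constant taking that particular value, and the five values are distinct.
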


\begin{corollary}\label{c:emptyUplus}
Let $G$ be any graph. Then,
\[
\d(E_1\uplus G)=\left\{
\begin{array}{ll}
0,\quad &\mbox{if } \diam^* G=0,\\
3/4,\quad &\mbox{if } \diam^* G=1,\\
1,\quad &\mbox{if } 1< \diam^* G\le2,\\
5/4,\quad &\mbox{if } \diam^* G>2 \text{ and } G\notin \mathcal{F},\\
3/2,\quad &\mbox{if } G\in \mathcal{F}.
\end{array}
\right.
\]
\end{corollary}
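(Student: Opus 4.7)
The plan is to derive Corollary \ref{c:emptyUplus} by directly specializing Theorem \ref{th:hypJoin} to the case $G_1 \simeq E_1$ and translating each condition on $G_j = G$ into one of the shape $\diam^* G = k$ or $G \in \mathcal{F}$. Since all five possible values of $\d$ (namely $0, 3/4, 1, 5/4, 3/2$) are listed by Proposition \ref{r:discretJoin}, and Theorem \ref{th:hypJoin} gives a disjoint case analysis, it suffices to match its cases with those of the corollary.

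First I would dispose of the two smallest values. The case $\d(E_1\uplus G)=0$ of Theorem \ref{th:hypJoin} asks that $G\simeq E_n$ for some $n\in\NN$, which is the same as saying $G$ has no edges, equivalently $\diam^* G = 0$. Next, $\d(E_1\uplus G)=3/4$ of Theorem \ref{th:hypJoin} requires $\D_G = 1$; this is the condition that every connected component of $G$ is either an isolated vertex or a single edge, with at least one edge present, which is exactly $\diam^* G = 1$.

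For $\d(E_1\uplus G)=1$, the third branch of Theorem \ref{th:hypJoin} splits into two alternatives, but the second alternative requires both factors to have at least two vertices, which fails here. Thus only the first alternative applies, giving $1 < \diam^* G \le 2$. The case $\d(E_1\uplus G)=3/2$ translates to $E_1\in\mathcal{F}$ or $G\in\mathcal{F}$; since every graph in $\mathcal{F}$ contains an induced cycle of length at least six and $E_1$ is a single vertex, we have $E_1\notin\mathcal{F}$, so the condition reduces to $G\in\mathcal{F}$. Finally, by Proposition \ref{r:discretJoin} the residual ``otherwise'' case of Theorem \ref{th:hypJoin} forces $\d(E_1\uplus G)=5/4$, which under the previous reductions occurs exactly when $\diam^* G > 2$ and $G\notin\mathcal{F}$.

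The only conceptual point to check is that the five conditions on $G$ in the corollary really are mutually exclusive and exhaustive; this is clear, since $\diam^* G\in\{0,1\}$, $1<\diam^*G\le 2$, and $\diam^* G > 2$ partition the possibilities, and the last of these is split by membership in $\mathcal{F}$. Since no novel argument is needed beyond Theorem \ref{th:hypJoin}, I do not foresee any real obstacle — the bookkeeping of matching $\diam^* G$ to the conditions $G\simeq E_n$, $\D_G=1$, $\diam^* G\le 2$ is the only step requiring care.
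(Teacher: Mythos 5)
Your proposal is correct and is exactly the intended argument: the paper gives no separate proof of Corollary \ref{c:emptyUplus}, presenting it as an immediate specialization of Theorem \ref{th:hypJoin} to $G_1\simeq E_1$, which is precisely what you carry out (including the correct identifications $G\simeq E_n \Leftrightarrow \diam^* G=0$ and $\D_G=1 \Leftrightarrow \diam^* G=1$, the latter being noted in the paper via the remark that $\diam^* G>1$ is equivalent to $\D_G\ge 2$). No further comparison is needed.
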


\section{Hyperbolicity of corona of two graphs}\label{Sect4}
In this section we study the hyperbolicity of the corona of two graphs, defined by Frucht and Harary in 1970, see \cite{FH}.

\begin{definition}\label{def:crown}
Let $G_1$ and $G_2$ be two graphs with $V(G_1)\cap V(G_2)=\emptyset$. The \emph{corona} of $G_1$ and $G_2$, denoted by $G_1\diamond G_2$, is defined as the graph obtained by taking one copy of $G_1$ and a copy of $G_2$ for each vertex $v\in V(G_1)$, and then joining each
vertex $v\in V(G_1)$ to every vertex in the $v$-th copy of $G_2$.
\end{definition}

From the definition, it clearly follows that the corona product of two graphs is a non-commutative and non-associative operation.
Figure \ref{fig:crown} show the corona of two graphs.

\begin{figure}[h]
\centering
\scalebox{1.2}
{\begin{pspicture}(-.75,-1.9)(6.5,1.9)
\psline[linewidth=0.01cm,dotsize=0.07055555cm 2.5]{-*}(-.5,-.5)(.5,-.5)
\psline[linewidth=0.01cm,dotsize=0.07055555cm 2.5]{-*}(.5,-.5)(.5,.5)
\psline[linewidth=0.01cm,dotsize=0.07055555cm 2.5]{-*}(.5,.5)(-.5,.5)
\psline[linewidth=0.01cm,dotsize=0.07055555cm 2.5]{-*}(-.5,.5)(-.5,-.5)
\uput[0](1,0){$\diamond$}
\psline[linewidth=0.01cm,dotsize=0.07055555cm 2.5]{-*}(1.5,-.5)(2.65,-.5)
\psline[linewidth=0.01cm,dotsize=0.07055555cm 2.5]{-*}(2.65,-.5)(2.075,.5)
\psline[linewidth=0.01cm,dotsize=0.07055555cm 2.5]{-*}(2.075,.5)(1.5,-.5)
\uput[0](2.8,0){\large{=}}
\psline[linewidth=0.01cm,dotsize=0.07055555cm 2.5]{-}(3.25,-.75)(4.2,-.7)(3.75,-1.61)(3.25,-.75)
\psline[linewidth=0.01cm,dotsize=0.07055555cm 2.5]{-}(5.8,-.7)(6.75,-.75)(6.25,-1.61)(5.8,-.7)
\psline[linewidth=0.01cm,dotsize=0.07055555cm 2.5]{-}(5.8,.7)(6.75,.75)(6.25,1.61)(5.8,.7)
\psline[linewidth=0.01cm,dotsize=0.07055555cm 2.5]{-}(3.25,.75)(4.2,.7)(3.75,1.61)(3.25,.75)
\psline[linewidth=0.01cm,dotsize=0.07055555cm 2.5]{-}(4.5,-.5)(5.5,-.5)(5.5,.5)(4.5,.5)(4.5,-.5)
\psline[linewidth=0.01cm,dotsize=0.07055555cm 2.5]{*-*}(4.5,-.5)(3.25,-.75)
\psline[linewidth=0.01cm,dotsize=0.07055555cm 2.5]{-*}(4.5,-.5)(4.2,-.7)
\psline[linewidth=0.01cm,dotsize=0.07055555cm 2.5]{-*}(4.5,-.5)(3.75,-1.61)
\psline[linewidth=0.01cm,dotsize=0.07055555cm 2.5]{*-*}(5.5,-.5)(5.8,-.7)
\psline[linewidth=0.01cm,dotsize=0.07055555cm 2.5]{-*}(5.5,-.5)(6.75,-.75)
\psline[linewidth=0.01cm,dotsize=0.07055555cm 2.5]{-*}(5.5,-.5)(6.25,-1.61)
\psline[linewidth=0.01cm,dotsize=0.07055555cm 2.5]{*-*}(5.5,.5)(5.8,.7)
\psline[linewidth=0.01cm,dotsize=0.07055555cm 2.5]{-*}(5.5,.5)(6.75,.75)
\psline[linewidth=0.01cm,dotsize=0.07055555cm 2.5]{-*}(5.5,.5)(6.25,1.61)
\psline[linewidth=0.01cm,dotsize=0.07055555cm 2.5]{*-*}(4.5,.5)(3.25,.75)
\psline[linewidth=0.01cm,dotsize=0.07055555cm 2.5]{-*}(4.5,.5)(4.2,.7)
\psline[linewidth=0.01cm,dotsize=0.07055555cm 2.5]{-*}(4.5,.5)(3.75,1.61)
\end{pspicture}}
\caption{Corona of two graphs $C_4 \diamond C_3$.} \label{fig:crown}
\end{figure}
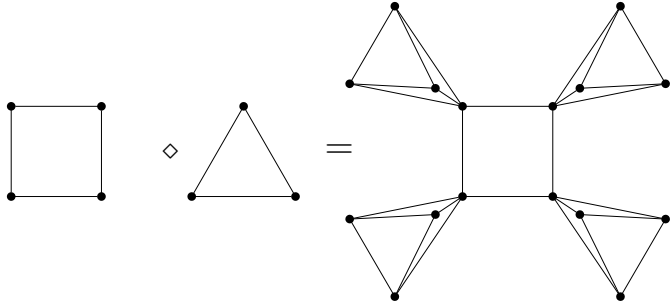

Many authors deal just with corona of finite graphs; however, our results hold for finite or infinite graphs.

If $G$ is a connected graph, we say that $v\in V(G)$ is a \emph{connection vertex} if
$G \setminus \{v\}$ is not connected.

Given a connected graph $G$, a family of subgraphs $\{G_n\}_{n\in \Lambda}$ of
$G$ is a \emph{T-decomposi\-tion} of $G$ if $\cup_n G_n=G$ and $G_n\cap G_m$ is either a connection vertex or the empty set for each $n\neq m$.

We will need the following result (see \cite[Theorem 5]{BRSV2}),
which allows to obtain global information about the
hyperbolicity of a graph from local information.

\begin{theorem}
\label{t:treedec}
Let $G$ be any connected graph and let $\{G_n\}_n$ be any
T-decomposition of $G$. Then $\d(G)=\sup_n \d(G_n)$.
\end{theorem}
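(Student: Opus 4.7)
The plan is to prove the two inequalities $\sup_n\delta(G_n)\le\delta(G)$ and $\delta(G)\le\sup_n\delta(G_n)$ separately.

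\textbf{Lower bound.} I would first show each $G_n$ is an isometric subgraph of $G$. Suppose $x,y\in G_n$ and let $\gamma$ be a geodesic from $x$ to $y$ in $G$ that leaves $G_n$. Since the intersection of $G_n$ with any other member of the T-decomposition is a connection vertex, $\gamma$ must exit $G_n$ through some connection vertex $v$. But $v$ is a cut vertex separating $G_n$ from any exterior block, so $\gamma$ is forced to re-enter through the same $v$; replacing the outside excursion by the trivial stay at $v$ strictly shortens $\gamma$, contradicting minimality. Hence every geodesic between points of $G_n$ stays inside $G_n$, so $d_{G_n}(x,y)=d_G(x,y)$, and Lemma \ref{l:subgraph} yields $\delta(G_n)\le\delta(G)$ for every $n$.

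\textbf{Upper bound.} Let $\delta^\ast:=\sup_n\delta(G_n)$. Given any geodesic triangle $T=\{x,y,z\}$ in $G$ and any $p\in[xy]$, I want $d_G(p,[yz]\cup[zx])\le\delta^\ast$. Pick $G_n$ containing $p$. For each $w\in\{x,y,z\}$ define $w':=w$ if $w\in G_n$, and otherwise let $w'$ be the unique connection vertex of $G_n$ that separates $w$ from $G_n$ in $G$; this is well-defined because the T-decomposition endows the family of blocks with a tree structure, so from $w$ there is a unique first connection vertex of $G_n$ met by any path. Any geodesic in $G$ from $w$ to a point of $G_n$ must pass through $w'$. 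Applying this to the three sides of $T$, the restrictions $[xy]\cap G_n$, $[yz]\cap G_n$, $[zx]\cap G_n$ are geodesic segments in $G_n$ (by isometry of $G_n$) with endpoints $\{x',y'\}$, $\{y',z'\}$, $\{z',x'\}$ respectively, where some segments may degenerate to a single point (this happens precisely when $G_n$ does not lie between the corresponding pair in the block tree, forcing e.g.\ $y'=z'$). Thus $T':=\{x',y',z'\}$ is a (possibly degenerate) geodesic triangle in $G_n$ with $p\in[x'y']$. Applying the hyperbolicity of $G_n$ gives $q\in[y'z']\cup[z'x']$ with $d_{G_n}(p,q)\le\delta(G_n)$, and since $[y'z']\cup[z'x']\subseteq[yz]\cup[zx]$ and $G_n$ is isometric, we conclude $d_G(p,q)\le\delta(G_n)\le\delta^\ast$.

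\textbf{Main obstacle.} The delicate step is the structural claim that the three restricted sub-segments assemble into a closed (possibly degenerate) geodesic triangle in $G_n$. This requires proving that the blocks of the T-decomposition form a tree when cut vertices are regarded as shared nodes, and that for any two points $u,v\in G$ there is a uniquely determined sequence of connection vertices through which every $u$-$v$ geodesic must travel (even though the geodesic itself need not be unique). Once this tree-of-blocks lemma is in hand, the various degenerate configurations — some of $x,y,z$ lying in $G_n$, some sub-segments collapsing to a point, or $T'$ reducing to a bigon — are all covered by the definition of $\delta$-thinness applied to $T'$, which does not require the three vertices to be distinct.
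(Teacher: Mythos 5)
The paper itself offers no proof of this statement: Theorem \ref{t:treedec} is quoted verbatim from \cite[Theorem 5]{BRSV2}. So your argument can only be measured against the standard proof of that result, and your two-inequality strategy (each $G_n$ is isometric in $G$, hence $\delta(G_n)\le\delta(G)$ by Lemma \ref{l:subgraph}; then project a geodesic triangle of $G$ into the piece containing $p$ for the reverse bound) is indeed the standard route. The difficulty is that the step you yourself flag as ``the main obstacle'' is not a deferred routine verification --- it is the entire content of the theorem, and it is left unproved. Both halves of your argument rest on the claim that a shared connection vertex $v=G_n\cap G_m$ separates $G_n\setminus\{v\}$ from $G_m\setminus\{v\}$ in $G$ (equivalently, that the pieces form a tree with well-defined gates $w'$). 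This does not follow from the definition of T-decomposition as stated in this paper. Concretely, let $G$ be the $6$-cycle with vertex sequence $v,a,u,b,w,c$ together with one pendant vertex attached to each of $u,v,w$, and take the family consisting of the three two-edge paths with vertex sequences $(v,a,u)$, $(u,b,w)$, $(w,c,v)$ and the three pendant edges. Every pairwise intersection is a single connection vertex or empty and the union is $G$, so this satisfies the stated definition; yet every piece is a tree, so $\sup_n\delta(G_n)=0$, while $\delta(G)\ge\delta(C_6)=3/2$ because $C_6$ is an isometric subgraph. Hence the separation property --- and with it your exit-and-re-enter argument for isometry and the existence of the gates --- genuinely requires either an additional hypothesis or the restriction to the canonical decomposition along cut vertices (which is what the application in Theorem \ref{th:corona} actually uses). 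Without establishing that structural lemma the proof cannot be completed, and with the definition taken literally no proof exists.

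A secondary gap sits in the upper bound even granting the tree structure: the inclusion $[y'z']\cup[z'x']\subseteq[yz]\cup[zx]$ fails in the degenerate case where a side of $T$ misses $G_n$ entirely; then $y'=z'$ is a single vertex that need not lie on $[yz]$, so the point $q$ produced by thinness of $T'$ may fail to sit on the original triangle. One must treat this case separately, e.g.\ by observing that $[xy]\cap G_n$ and $[zx]\cap G_n$ then form a geodesic bigon in $G_n$ from $x'$ to $y'=z'$, which is $\delta(G_n)$-thin, and by choosing $G_n$ to contain an edge of $[xy]$ through $p$ when $p$ is a cut vertex shared by several pieces. These points are repairable, but they are precisely the degenerate configurations that your final paragraph waves at rather than checks.
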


We remark that the corona $G_1\diamond G_2$ of two graphs is connected if and only if $G_1$ is connected.

The following result characterizes the hyperbolicity of the corona of two graphs and provides the precise value of its hyperbolicity constant.

\begin{theorem}\label{th:corona}
Let $G_1,G_2$ be any two graphs. Then $\d(G_1\diamond G_2)=\max\{\d(G_1),\d(E_1\uplus G_2)\}$.
\end{theorem}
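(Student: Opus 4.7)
The plan is to exhibit a T-decomposition of $G_1\diamond G_2$ and then apply Theorem \ref{t:treedec}. For each $v\in V(G_1)$, let $G_2^{(v)}$ denote the $v$-th copy of $G_2$ and let $H_v$ be the subgraph of $G_1\diamond G_2$ induced by $\{v\}\cup V(G_2^{(v)})$; by Definition \ref{def:crown} the graph $H_v$ is isomorphic to $E_1\uplus G_2$. The candidate decomposition is
\[
\mathcal{D}:=\{G_1\}\cup\{H_v:v\in V(G_1)\},
\]
whose members visibly cover every vertex and every edge of $G_1\diamond G_2$ and satisfy $G_1\cap H_v=\{v\}$ and $H_v\cap H_w=\varnothing$ for $v\ne w$.

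Before applying Theorem \ref{t:treedec}, which is stated for connected graphs, I would reduce to that setting. If $\{C_j\}$ are the connected components of $G_1$, then $\{C_j\diamond G_2\}$ are the connected components of $G_1\diamond G_2$ (no edge of the corona crosses copies of $G_2$ associated with different components of $G_1$), so by the definition of $\d$ for disconnected spaces it suffices to verify the claim on each $C_j$. I would also dispose of two degenerate cases separately: when $V(G_2)=\varnothing$ one has $G_1\diamond G_2=G_1$ and $E_1\uplus G_2=E_1$, and both sides of the claim equal $\d(G_1)$; when $|V(G_1)|=1$ the corona collapses to $E_1\uplus G_2$ and $\d(G_1)=0$, so both sides equal $\d(E_1\uplus G_2)$.

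With $G_1$ connected, $|V(G_1)|\ge 2$, and $V(G_2)\ne\varnothing$, every $v\in V(G_1)$ is a connection vertex of $G_1\diamond G_2$: removing $v$ separates $V(G_2^{(v)})$ from the non-empty set $V(G_1)\setminus\{v\}$, because the only edges leaving $V(G_2^{(v)})$ are the ones that join it to $v$. Hence $\mathcal{D}$ is a T-decomposition of $G_1\diamond G_2$, and Theorem \ref{t:treedec} yields
\[
\d(G_1\diamond G_2)=\sup_{D\in\mathcal{D}}\d(D)=\max\bigl\{\d(G_1),\,\sup_{v\in V(G_1)}\d(H_v)\bigr\}=\max\{\d(G_1),\d(E_1\uplus G_2)\},
\]
using $H_v\simeq E_1\uplus G_2$ in the last step.

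I do not expect a real obstacle here: the entire content of the theorem lies in recognising the canonical block structure of a corona and matching it against Theorem \ref{t:treedec}. The only bookkeeping is to verify the connection-vertex property carefully and to dispose of the two edge cases where the T-decomposition is vacuous.
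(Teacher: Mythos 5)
Your proof is correct and follows exactly the paper's argument: the paper also takes the T-decomposition $\{G_1\}\cup\{\{v\}\uplus G_2\}_{v\in V(G_1)}$, invokes Theorem \ref{t:treedec}, and reduces the disconnected case to components. Your extra care with the degenerate case $|V(G_1)|=1$ (where $v$ is not a connection vertex, so the decomposition must be treated separately) is a detail the paper glosses over, but the approach is the same.
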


\begin{proof}
Assume first that $G_1$ is connected.
The formula follows from Theorem \ref{t:treedec}, since $\{G_1\}\cup \big\{\{v\}\uplus G_2\big\}_{v\in V(G_1)}$ is a T-decomposition of $G_1\diamond G_2$.
Finally, note that if $G_1$ is a non-connected graph, then we can apply the previous argument to each connected component.
\end{proof}

Note that Corollary \ref{c:emptyUplus} provides the precise value of $\d(E_1\uplus G_2)$.

\begin{corollary}\label{cor:sup}
Let $G_1,G_2$ be any two graphs. Then $G_1\diamond G_2$ is hyperbolic if and only if $G_1$ is hyperbolic.
\end{corollary}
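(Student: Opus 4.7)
The plan is to read off the corollary directly from Theorem \ref{th:corona} together with Corollary \ref{cor:SP}. By Theorem \ref{th:corona}, we have the exact formula
\[
\d(G_1\diamond G_2)=\max\{\d(G_1),\,\d(E_1\uplus G_2)\},
\]
so $G_1\diamond G_2$ is hyperbolic (i.e.\ $\d(G_1\diamond G_2)<\infty$) if and only if both $\d(G_1)<\infty$ and $\d(E_1\uplus G_2)<\infty$.

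The second finiteness is automatic: since $E_1\uplus G_2$ is a graph join, Corollary \ref{cor:SP} gives $\d(E_1\uplus G_2)\le 3/2<\infty$ for \emph{every} graph $G_2$. In particular this term contributes nothing to the hyperbolicity/non-hyperbolicity dichotomy of the maximum, so the finiteness of $\d(G_1\diamond G_2)$ is controlled entirely by $\d(G_1)$. This yields $\d(G_1\diamond G_2)<\infty$ if and only if $\d(G_1)<\infty$, which is the statement of the corollary.

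There is essentially no obstacle here, since all the substantive work has been done in Theorem \ref{th:corona} (via the T-decomposition $\{G_1\}\cup\{\{v\}\uplus G_2\}_{v\in V(G_1)}$ and Theorem \ref{t:treedec}) and in Corollary \ref{cor:SP} (the universal bound $3/2$ for joins). The only thing to keep in mind is the convention that when $G_1$ is disconnected one applies the argument componentwise, so that $\d(G_1)=\sup_i\d(G_1^{(i)})$ remains the only possible source of unboundedness, exactly as in the proof of Theorem \ref{th:corona}.
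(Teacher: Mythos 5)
Your proof is correct and follows exactly the paper's own route: apply Theorem \ref{th:corona} to get $\d(G_1\diamond G_2)=\max\{\d(G_1),\d(E_1\uplus G_2)\}$ and then use the universal bound $\d(E_1\uplus G_2)\le 3/2$ from Corollary \ref{cor:SP}, so that $\d(G_1)\le\d(G_1\diamond G_2)\le\max\{\d(G_1),3/2\}$. Nothing is missing.
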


\begin{proof}
By Theorem \ref{th:corona} we have $\d(G_1\diamond G_2)=\max\{\d(G_1),\d(E_1\uplus G_2)\}$. Then, by Corollary \ref{cor:SP} we have $\d(G_1) \le \d(G_1\diamond G_2) \le \max\{\d(G_1),3/2\}$.
\end{proof}

\end{document}